\documentclass[reqno,12pt]{amsart}
\usepackage{amsmath, amssymb, amsthm, epsfig}

\usepackage{color}
\usepackage[inner=1.0in,outer=1.0in,bottom=1.0in, top=1.0in]{geometry}

\usepackage{hyperref, latexsym}
\usepackage{url}

\usepackage{color}







\def\today{\ifcase\month\or
  January\or February\or March\or April\or May\or June\or
  July\or August\or September\or October\or November\or December\fi
  \space\number\day, \number\year}

 \newtheorem{theorem}{Theorem}[section]
\newtheorem{conj}[theorem]{Conjecture}
 \newtheorem{lemma}[theorem]{Lemma}
 \newtheorem{prop}[theorem]{Proposition}
 
 \theoremstyle{definition}

 \theoremstyle{remark}

\newcommand{\sumstar}{\sideset{}{^*}\sum}

\def\L{\mathbf L}
\def\B{\mathbf B}

\newcommand{\vecx}{\boldsymbol{x}}
\newcommand{\vecy}{\boldsymbol{y}}

\newcommand{\vecu}{\boldsymbol{u}}
\newcommand{\vecv}{\boldsymbol{v}}

\newcommand{\vecz}{\boldsymbol{z}}
\newcommand{\veck}{\boldsymbol{k}}
\newcommand{\vecl}{\boldsymbol{\ell}}
\newcommand{\vecm}{\boldsymbol{m}}
\newcommand{\vecn}{\boldsymbol{n}}

\begin{document}

\title{On the zeros of Epstein zeta functions near the critical line} 

\date{\today}

\author[Y. Lee]{Yoonbok Lee}

\allowdisplaybreaks
\numberwithin{equation}{section}

\maketitle

\begin{abstract}
Let $Q$ be a positive definite quadratic form with integral coefficients and let $E(s,Q)$ be the Epstein zeta function associated with $Q$. Assume that the class number of $Q$ is bigger than $1$. Then we estimate the number of zeros of $E(s,Q)$ in the region $   \Re s > \sigma_T ( \theta  ) := 1/2 + ( \log T)^{- \theta}$ and $ T < \Im s < 2T$, to provide its asymptotic formula for fixed $ 0 < \theta < 1$ conditionally. Moreover, it is unconditional if the class number of $Q$ is $2$ or $3$ and $  0 < \theta  < 1/13$.
\end{abstract}

\section{Introduction}

Let $ K = \mathbb{Q}( \sqrt{D}) $ be a quadratic imaginary field of class number $h:=h_D $ and let $ \chi_1, \dots, \chi_h$ be its ideal class characters. The Hecke $L$-function attached to $\chi_j$ is defined by
 $$ L_j ( s) := L(s, \chi_j) := \sum_{\mathfrak{n}}   \frac{ \chi_j(\mathfrak{n})}{   \mathcal{N}(\mathfrak{n}) ^s } = \prod_{\mathfrak{p}}  \bigg(  1- \frac{ \chi_j( \mathfrak{p})}{  \mathcal{N}(\mathfrak{p})^s } \bigg)^{-1}  $$ 
 for $ \Re s > 1 $, where $ \mathcal{N}$ is the norm. 
Each $L_j$ has an analytic continuation to $\mathbb{C}$ except for a possible pole at $s=1$ and it satisfies the functional equation 
 \begin{equation}\label{func eqn 1}
  \bigg(  \frac{ \sqrt{-D}}{ 2 \pi} \bigg)^s \Gamma(s) L_j (s) = \bigg(  \frac{ \sqrt{-D}}{ 2 \pi} \bigg)^{1-s} \Gamma(1 - s) L_j (1-s)   .
  \end{equation}
  By the Euler product and \eqref{func eqn 1}, $L_j$ has no zeros in $\Re s > 1 $ and the negative integers are the only zeros of $L_j $ in $\Re s < 0$. All the other zeros are on the strip $0\leq \Re s \leq 1 $ and we believe that they are actually on the line $\Re s = 1/2$.  
 
  These Hecke $L$-functions have a functional relation with an Epstein zeta function. To be precise, let $Q$ be a positive definite quadratic form with integral coefficients and its fundamental discriminant $D$.  The Epstein zeta function $E(s,Q)$ associated with $Q$ is defined by
 $$
 E(s,Q): = \sum_{\substack{ m,n \in \mathbb{Z} \\ (m,n) \neq (0,0) }}  \frac{1}{   Q(m,n)^s} 
 $$
 for $ \Re s > 1 $. It satisfies
  \begin{equation}\label{func rel}
 E(s,Q) = \frac{w_D}{ h_D } \sum_j  \overline{  \chi_j ( \mathfrak{a}_Q )} L_j (s) ,
 \end{equation}
 where $w_D$ is the number of roots of unity in $K$ and $ \mathfrak{a}_Q $ is an integer ideal in the ideal class corresponding to the equivalence class of $Q$. If $h_D=1$, then the Epstein zeta function $E(s,Q) = w_D L_1 (s)$ is nothing but a Hecke $L$-function up to a constant factor. Hence we expect $E(s,Q)$ satisfy the generalized Riemann hypothesis. 
 However, if $h_D>1$, the distribution of zeros of $E(s,Q)$ is different to the Riemann zeta function and indeed $E(s,Q)$ has zeros off the line $\Re s = 1/2$. Davenport and Heilbronn \cite{DH} showed that $E(s,Q)$ has infinitely many zeros on $\Re s > 1$. Voronin \cite{V} showed that the number of zeros of $E(s,Q)$ in the rectangle $ \sigma_1 < \Re s < \sigma_2 $ and $ T < \Im s < 2T$ is
 $$ N_{E(s,Q)} (\sigma_1, \sigma_2 : T , 2T)  \gg T$$
 for any fixed $ 1/2 < \sigma_1 < \sigma_2 < 1 $ as a consequence of a joint universality of Hecke $L$-functions. The author in \cite{Le2} proved that
  $$ \lim_{T \to \infty } \frac1T  N_{E(s,Q)} (\sigma_1, \sigma_2 : T , 2T)  =    \int_{\sigma_1}^{\sigma_2} g(\sigma) d\sigma  $$
holds for any $1/2 < \sigma_1 \leq \sigma_2 $ and some nonnegative continuous function $g(\sigma)$. 
By a straightforward adaptation of \cite{LLR}, the author in \cite{Le3} improved the above asymptotic formula to
$$\frac1T   N_{E(s,Q)} (\sigma_1, \sigma_2 : T , 2T)  =      \int_{\sigma_1}^{\sigma_2} g(\sigma) d\sigma   + O\bigg(    \frac{ \log \log T} { ( \log T)^{ \sigma_1/2}}  \bigg)  $$
for $ h = 2,3$ and fixed $ 1/2 < \sigma_1 < \sigma_2 <1 $. The author with Gonek in \cite{GL} considered the case $ h>3$ and proved that 
$$\frac1T N_{E(s,Q)} (\sigma_1, \sigma_2 : T , 2T)  =     \int_{\sigma_1}^{\sigma_2} g(\sigma) d\sigma    + O(  e^{- b \sqrt{\log \log T}} )  $$
for fixed $ 1/2 < \sigma_1 < \sigma_2 <1 $.

   Now we examine the zero-density on or near the $1/2$-line for a linear combination of $L$-functions with a same functional equation, which generalizes our Epstein zeta functions. Bombieri and Hejhal in \cite{BH} proved that almost all zeros of a linear combination $F(s)$ of inequivalent $L$-functions   with a same functional equation are simple and on the $1/2$-line  assuming RH and a zero-spacing assumption for each $L$-function. Hejhal in \cite{H}  investigated the zeros of $F (s) = (\cos \alpha) e^{i w_1} L_1 (s) + (\sin \alpha) e^{iw_2}  L_2 (s) $ near the $1/2$-line and showed that  
 \begin{equation}\label{zero density Hejhal}
 \frac{ T \log T}{  G \sqrt{ \log \log T}}\ll     N_{F(s) } ( \sigma_1, \infty : T, 2T) \ll\frac{ T \log T}{  G \sqrt{ \log \log T}} 
 \end{equation}
 for $ \sigma_1 = 1/2 + G/ \log T $ and for almost all $\alpha$, where $ (\log \log T)^\kappa  \leq G \leq (\log T)^{1-\delta}$, $ \kappa \in (1,3)$ and $ \delta \in (0, 1/10)$. We expect that it holds for all $\alpha$ except for the cases $ \cos \alpha =0 $ and $ \sin \alpha = 0$, but it is still an open question whether a given Epstein zeta function with class number 2 or 3 satisfy \eqref{zero density Hejhal}. Selberg in \cite{S} sketched his idea which proves \eqref{zero density Hejhal} for almost all linear combinations $F(s)$ of $L$-functions.

 The aim of this paper is finding an asymptotic formula for the zero counting function
$$   N_{E(s,Q)} ( \sigma_T, \infty : T, 2T)$$
of a given Epstein zeta function $E(s,Q)$ as $T\to \infty$, where  
$$ \sigma_T = \sigma_T(\theta) = \frac12 + \frac{1}{( \log T)^\theta}$$
for fixed $ 0 < \theta < 1 $. Let $L_1, \ldots, L_J$ be inequivalent Hecke $L$-functions on $K$, i.e., $L_j (s) = L(s, \chi_j )$ and $ \chi_j \neq \chi_\ell, \overline{\chi}_\ell$ for $ j \neq \ell $. By the Euler product, we may write
$$ \log L( s, \chi_j ) = \sum_p \sum_{n=1}^\infty    \frac{ a_j ( p^n)}{ p^{ns}}$$
for $\Re s > 1 $, then it is well-known that
\begin{equation}\label{eqn:soc} 
\sum_p \frac{ a_j (p) a_\ell(p) }{ p^{2 \sigma_T}}   = \delta_{j,\ell}  \xi_j \theta \log \log T + c_{j,\ell} + O\bigg( \frac{1}{ ( \log T)^\theta} \bigg)
\end{equation} 
for $ j ,\ell \leq J $, where $\delta_{j,\ell} = 1 $ if $ j = \ell$, $\delta_{j,\ell}=0$ if $ j \neq \ell $, $\xi_j =4 $ if $\chi_j $ is real, $ \xi_j = 2 $ if $\chi_j $ is nonreal and the $c_{j, \ell}$ are some constants. Consider
 $$ F_J (s) := \sum_{j=1}^J b_j L_j (s)$$
 for $b_1 , \ldots, b_J \in \mathbb{C}\setminus \{ 0 \} $ satisfying
 $$|b_1|^2 + \cdots + |b_J|^2 = 1 . $$
 The Epstein zeta function $E(s,Q)$ is a special case of $F_J (s)$ up to a constant factor by \eqref{func rel} and the relation $L(s,\chi ) = L(s,\bar{\chi})$. In this case, $J$ is the sum of the number of real characters and the half of the number of non-real characters. Hence, $J=2$ if $h=2,3$ and $ J > 2 $ if $h > 3 $.

By Littlewood's lemma, the zero counting function
$$ N_{F_J} ( \sigma : T) :=  N_{ F_J } ( \sigma , \infty :  T, 2T) $$ 
for $ \sigma > 1/2$ is essentially a derivative of the integral
$$ \frac{1}{ 2 \pi}  \int_T^{2T} \log | F_J ( \sigma  +it) | dt . $$
Moreover, we believe that the following conjecture is true.
\begin{conj}\label{conj}
Let $ J >1 $, $ 0 < \theta < 1 $ and $ \sigma_T = 1/2+ ( \log T)^{-\theta}$. Then there exists $ \eta>0$ such that
\begin{equation}
 \frac{1}{ T }  \int_T^{2T} \log | F_J ( \sigma_T   +it) | dt  =      \mathbb{E} \big[  \log | F_J ( \sigma_T  :  X ) |  \big]  + O \bigg(  \frac{1}{ ( \log T)^\eta} \bigg)
 \end{equation}
as $ T \to \infty$, where
$$ F_J ( s : X ) :=  \sum_{j=1}^J b_j  L_j ( s: X) $$
and
$$ L_j ( s: X) := \prod_{\mathfrak{p}} \bigg(  1 -    \frac{ \chi_j ( \mathfrak{p}) X(  \mathcal{N}(\mathfrak{p}) )}{ \mathcal{N}(\mathfrak{p})^s }  \bigg)^{-1}$$
 are the random models of $F_J (s)$ and $ L_j(s)$ for $j=1, \ldots, J$. Here, the $X(p)$ are uniformly and independently distributed on the unit circle $ \mathbb{T}$ and $ X(p^\ell) := X(p)^\ell$.
 \end{conj}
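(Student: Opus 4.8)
\medskip
\noindent\textbf{A plan of proof.} I would interpose, between the $t$-average and the random model, a common truncated Euler product, following the scheme of \cite{LLR} and its adaptation to linear combinations in \cite{GL}. Fix a truncation parameter $y=y(T)$, say $y=\exp\big((\log T)^{\beta}\big)$ with $\beta>0$ small, and put
\[
\mathcal{L}_{j,y}(s):=\prod_{\mathcal{N}(\mathfrak{p})\le y}\Big(1-\chi_j(\mathfrak{p})\,\mathcal{N}(\mathfrak{p})^{-s}\Big)^{-1},\qquad F_{J,y}(s):=\sum_{j=1}^{J}b_j\,\mathcal{L}_{j,y}(s),
\]
so that each $\log\mathcal{L}_{j,y}$ is an honest Dirichlet polynomial of length about $y$. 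The plan is then a three-link chain: first, $\tfrac1T\int_T^{2T}\log|F_J(\sigma_T+it)|\,dt$ equals $\tfrac1T\int_T^{2T}\log|F_{J,y}(\sigma_T+it)|\,dt$ up to $O((\log T)^{-\eta})$ (truncation on the arithmetic side); second, the latter equals $\mathbb{E}\big[\log|F_{J,y}(\sigma_T:X)|\big]+O((\log T)^{-\eta})$ (passage to the random model); and third, $\mathbb{E}\big[\log|F_{J,y}(\sigma_T:X)|\big]=\mathbb{E}\big[\log|F_J(\sigma_T:X)|\big]+O((\log T)^{-\eta})$, the last being routine from the almost sure convergence of the random Euler products for $\Re s>1/2$ together with a tail variance estimate supplied by \eqref{eqn:soc}.

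\medskip
\noindent Passage to the random model I expect to be the more tractable link. By the mean value theorem for Dirichlet polynomials, the moments of $F_{J,y}(\sigma_T+it)$ over $t\in[T,2T]$ of order up to $\asymp(\log T)/\log y$ match those of $F_{J,y}(\sigma_T:X)$ up to an admissible error, the governing ``diagonal'' being precisely the sums evaluated in \eqref{eqn:soc}; in particular this controls the measure of the set where $|F_{J,y}(\sigma_T+it)|$ is abnormally small. A Beurling--Selberg (or plain Weierstrass) approximation of $z\mapsto\log|z|$, with that small-value set excised and treated separately, then transfers the $t$-average to the $X$-average; equivalently, one invokes the joint equidistribution of $\big(\mathcal{N}(\mathfrak{p})^{-it}\big)_{\mathcal{N}(\mathfrak{p})\le y}$ on the torus, whose discrepancy over $[T,2T]$ is $\ll y^{C}/T$ and hence negligible for this $y$.

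\medskip
\noindent The first link carries the real difficulty, and I expect the main obstacle to live there. One truncates each $L_j$ separately: a zero-density estimate for $L_j$ just to the left of $\sigma_T$ --- guaranteeing that all but $O(T(\log T)^{-A})$ of the ordinates in $[T,2T]$ have a zero-free rectangular neighbourhood --- together with the usual contour shift, should give $\log L_j(\sigma_T+it)=\log\mathcal{L}_{j,y}(\sigma_T+it)+(\text{small})$ off an exceptional set of acceptably small measure. The complication special to $F_J$ being a \emph{linear combination} rather than a single Euler product is that $|F_J(\sigma_T+it)|$ can be far smaller than $\max_j|b_j\mathcal{L}_{j,y}(\sigma_T+it)|$ --- which is exactly what occurs near the many off-line zeros of $F_J$ (recall $N_{F_J}(\sigma_1,\sigma_2:T,2T)\gg T$ for fixed $1/2<\sigma_1<\sigma_2<1$, \cite{V}) --- so a good multiplicative approximation of each summand $L_j$ does not by itself control $\log|F_J|$. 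One must therefore adjoin the small-value set $\mathcal{S}:=\{t\in[T,2T]:|F_{J,y}(\sigma_T+it)|<e^{-V}\}$, bound $\mathrm{meas}\,\mathcal{S}$ by the Dirichlet-polynomial moments above, and --- the crux --- show that $\int_{\mathcal{S}}\big|\log|F_J(\sigma_T+it)|\big|\,dt=o(T(\log T)^{-\eta})$, which via Littlewood's lemma / Jensen's inequality reduces to an \emph{a priori} zero-density bound for $F_J$ of the shape $N_{F_J}(\sigma:T)\ll T(\log T)^{c(1-\sigma)}$ valid uniformly down to $\sigma=\sigma_T-\varepsilon$ (the large-value part of the integral being harmless, controlled by the convexity bound $\log|F_J(\sigma_T+it)|\ll\log T$). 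Such a density estimate for the Euler-product-free $F_J$, sharp and uniform all the way down to $\sigma_T=1/2+(\log T)^{-\theta}$, is precisely what is missing in general --- hence the conjectural status. When $J=2$ the extra structure $F_2(s)=0\iff L_1(s)/L_2(s)=-b_2/b_1$ makes the value distribution of the single meromorphic function $L_1/L_2$ near the line accessible through an adaptation of \cite{LLR} in the manner of \cite{Le3}, and balancing $y$ against the density and subconvexity inputs this produces is what should pin the unconditional range down to $0<\theta<1/13$; for $J>2$ no comparable reduction is known, though Conjecture~\ref{conj} would follow from the generalized Riemann hypothesis for the $L_j$, under which the first link becomes routine.
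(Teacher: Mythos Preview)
Your reading is correct: the statement is a \emph{conjecture}, and the paper does not prove it in general. The only unconditional content is the $J=2$ partial result (Theorem~\ref{thm1}), whose proof the paper omits as ``straightforward from \cite{HL}.'' Your plan rightly isolates the arithmetic truncation link as the obstruction, and your diagnosis --- that controlling $\int_{\mathcal S}\big|\log|F_J|\big|\,dt$ on the small-value set demands an \emph{a priori} zero-density bound for $F_J$ itself, uniform down to $\sigma_T$, which is unavailable for $J>2$ --- matches the paper's remark that ``there are more logarithmic singularities near the $1/2$-line.'' (Your closing claim that GRH for the $L_j$ alone settles the first link is not asserted in the paper and is more delicate than you suggest: GRH makes $\log L_j$ well approximable by a Dirichlet polynomial, but the small-value set of the \emph{combination} $F_J$ still has to be controlled, and GRH for the $L_j$ says nothing directly about the off-line zeros of $F_J$.)

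For $J=2$ your reduction to the value distribution of $L_1/L_2$ is exactly the paper's: it writes
\[
\int_T^{2T}\log|F_2(\sigma_T+it)|\,dt=\int_T^{2T}\log|b_1 L_2(\sigma_T+it)|\,dt+\int_T^{2T}\log\Big|\tfrac{L_1}{L_2}(\sigma_T+it)+\tfrac{b_2}{b_1}\Big|\,dt,
\]
handles the first integral by the Dirichlet-polynomial approximation you describe, and estimates the second by the $a$-value technology of Ha--Lee. One correction of reference: the relevant input is \cite{HL} ($a$-values \emph{near the critical line}), not \cite{LLR}/\cite{Le3}, which treat fixed $\sigma>1/2$; it is the near-line analysis of \cite{HL} that produces the constraint $\theta<1/13$ with $\eta<(1-13\theta)/4$. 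Your three-link chain through a truncated $F_{J,y}$ is a slightly different packaging --- you truncate first and then pass to the random model globally, whereas the paper separates the multiplicative part $\log|L_2|$ from the genuinely non-multiplicative part $\log|L_1/L_2+b_2/b_1|$ and applies distinct tools to each --- but for $J=2$ both routes rest on the same $a$-value input.
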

 Conjecture \ref{conj} for $J>2$ is technically more difficult than the estimates in \cite{GL} for fixed $ \sigma > 1/2$, since there are more logarithmic singularties near the $1/2$-line. However, if $J=2$, it is possible to prove Conjecture \ref{conj} for a small $\theta$. One sees that
 $$ \int_T^{2T} \log | F_2 ( \sigma_T + it) | dt =  \int_T^{2T} \log |b_1L_2  ( \sigma_T + it) | dt +  \int_T^{2T} \log\bigg|   \frac{ L_1 }{ L_2  }(\sigma_T + it)  +  \frac{b_2}{b_1} \bigg| dt .$$
The first integral on the right hand side can be estimated by an usual Dirichlet polynomial approximation for $\log |L_2( \sigma_T + it )|$ and the second integral can be estimated by adapting \cite{HL}. Since its proof is straightforward from \cite{HL}, we state it without a proof as follows.
\begin{theorem}\label{thm1}
Conjecture \ref{conj} holds for $J=2$ and $ 0<  \theta < 1/13$ with $\eta < (1-13\theta)/4$. 
\end{theorem}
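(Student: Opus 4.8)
The plan is to reduce the statement to two independent estimates, corresponding to the two integrals in the decomposition
\[
\int_T^{2T} \log |F_2(\sigma_T+it)|\,dt = \int_T^{2T} \log|b_1 L_2(\sigma_T+it)|\,dt + \int_T^{2T}\log\Bigl|\tfrac{L_1}{L_2}(\sigma_T+it) + \tfrac{b_2}{b_1}\Bigr|\,dt,
\]
and to match them against the analogous decomposition for the random model $F_2(\sigma_T:X)$. For the first integral, one uses a Dirichlet polynomial approximation for $\log|L_2(\sigma_T+it)|$: on the relevant range $\sigma_T = 1/2 + (\log T)^{-\theta}$ with $\theta$ small, a mollified/truncated Euler product over primes $p \le T^{\varepsilon}$ (with a correction for the small-prime contribution and a bound for the exceptional set where $L_2$ has a zero too close to $\sigma_T$, handled by a zero-density estimate à la the Selberg moment method) gives
\[
\int_T^{2T} \log|L_2(\sigma_T+it)|\,dt = T\,\mathbb{E}\bigl[\log|L_2(\sigma_T:X)|\bigr] + O\bigl(T(\log T)^{-\eta}\bigr),
\]
where $\mathbb{E}[\log|L_2(\sigma_T:X)|]$ is likewise computed by its Dirichlet series; the constant-difference term $\log|b_1|$ is the same on both sides.

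For the second, genuinely new integral $\int_T^{2T}\log\bigl|\tfrac{L_1}{L_2}(\sigma_T+it)+\tfrac{b_2}{b_1}\bigr|\,dt$, the strategy is to invoke the machinery of Hejhal--Luo (the reference \cite{HL}) essentially verbatim. The quotient $g(s) := (L_1/L_2)(s) + b_2/b_1$ is a meromorphic function whose value distribution on the vertical line $\Re s = \sigma_T$ can be controlled by the joint distribution of $\log L_1$ and $\log L_2$; the key inputs are (i) the covariance structure \eqref{eqn:soc}, which shows that $\log L_1(\sigma_T+it)$ and $\log L_2(\sigma_T+it)$ behave like independent complex Gaussians with variance $\sim \xi_j\theta\log\log T$, so that $\tfrac{L_1}{L_2}(\sigma_T+it)$ does not concentrate near the fixed point $-b_2/b_1$, and (ii) a count of the zeros and poles of $g$ in the strip $\Re s > \sigma_T$, again via Littlewood's lemma and a second-moment / zero-density argument. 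Applying Jensen-type bounds one shows
\[
\int_T^{2T}\log\Bigl|\tfrac{L_1}{L_2}(\sigma_T+it)+\tfrac{b_2}{b_1}\Bigr|\,dt = T\,\mathbb{E}\Bigl[\log\Bigl|\tfrac{L_1}{L_2}(\sigma_T:X)+\tfrac{b_2}{b_1}\Bigr|\Bigr] + O\bigl(T(\log T)^{-\eta}\bigr),
\]
and adding the two pieces yields the claimed formula with any $\eta < (1-13\theta)/4$, which forces $\theta < 1/13$.

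The main obstacle — and the reason the admissible range is only $0 < \theta < 1/13$ rather than all of $(0,1)$ — is the treatment of the logarithmic singularities of $g$ arising from zeros of $L_1$ and poles coming from zeros of $L_2$ that lie within distance $\sim (\log T)^{-\theta}$ of the line $\Re s = \sigma_T$. As $\theta \to 1$ the line pushes toward $1/2$, where the density of such zeros is of order $T\log T$, and one can no longer afford to simply delete the bad $t$; the Dirichlet polynomial approximation to $\log|L_j|$ has error terms whose size grows like a power of $\log\log T$ times the number of nearby zeros, and balancing this against the main term $\sim \theta\log\log T$ requires $\theta$ below an explicit threshold. Since the paper states that the proof is ``straightforward from \cite{HL}'', the role of this proof proposal is mainly to record that the two-integral decomposition reduces Theorem \ref{thm1} to (a) a standard Selberg-type central limit theorem for $\log|L_2|$ on a line slightly to the right of $1/2$, and (b) the Hejhal--Luo analysis of $\log|L_1/L_2 + c|$, with the numerology $(1-13\theta)/4$ coming from optimizing the zero-density and mean-value error terms in (b); I would carry out (a) first as a warm-up, then import (b) with only notational changes.
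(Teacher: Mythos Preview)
Your proposal follows the paper's own sketch almost verbatim: the paper gives exactly the two-integral decomposition you write down, says the first integral is handled by ``an usual Dirichlet polynomial approximation for $\log|L_2(\sigma_T+it)|$'' and the second ``by adapting \cite{HL}'', and then states Theorem~\ref{thm1} without further proof. So the overall strategy is correct and matches the paper.

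One concrete correction: the reference \cite{HL} is \emph{Ha--Lee}, ``The $a$-values of the Riemann zeta function near the critical line'' (J.\ Math.\ Anal.\ Appl.\ \textbf{464} (2018), 838--863), not ``Hejhal--Luo''. This matters for more than bookkeeping: that paper studies integrals of the form $\int_T^{2T}\log|\zeta(\sigma_T+it)-a|\,dt$ for a fixed nonzero constant $a$ near the critical line, which is the direct prototype for your second integral with $\zeta$ replaced by $L_1/L_2$ and $a$ by $-b_2/b_1$. The numerology $\eta<(1-13\theta)/4$ and the constraint $\theta<1/13$ are inherited from the error analysis in that paper, so if you actually carry this out you should trace the exponents there rather than re-derive them from a generic zero-density heuristic.
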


The main feature of this paper is our estimation of 
 $$ \mathbb{E} \big[  \log | F_J ( \sigma_T   :  X ) |  \big] $$
  as $T \to \infty $ for $J > 1$.
  \begin{theorem}\label{thrm expt} 
Let $ J >1 $, $ 0 < \theta < 1 $ and $ \sigma_T = 1/2+ ( \log T)^{-\theta}$. Then  \begin{align*}
       \mathbb{E} \big[  \log  | F_J ( \sigma_T   :  X ) |  \big] 
  = &   \frac{ \sqrt{  \theta  \log \log T}  }{\sqrt{ \xi \pi^J }}   \sum_{ \ell=1}^J   \int_{\mathcal{R}_\ell }     u_\ell         e^{ -  \sum_j  u_j^2 /  \xi_j  }           d\vecu  \\
 &   +     \sum_{ \ell=1}^J     \frac{   \log |b_\ell |}{\sqrt{ \xi \pi^J }}   \int_{\mathcal{R}_\ell }      e^{ -  \sum_j  u_j^2 /  \xi_j }            d\vecu     +  O\bigg(  \frac{ 1}{  ( \log \log T)^{1/4}}    \bigg) 
  \end{align*}
  as $T \to \infty$, where
$$  \xi :=   \prod_j \xi_j ,$$
  \begin{equation}\label{def Rell}
 \mathcal{R}_{\ell} :=  \{ ( u_1 , \dots, u_J ) \in \mathbb{R}^J  : u_\ell     = \max \{ u_1    , \dots , u_J  \}  \}  
 \end{equation}
for $ \ell = 1, 2,\ldots, J$ and $d \vecu = du_1 \cdots du_J $. 
  \end{theorem}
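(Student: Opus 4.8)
The plan is to write $\log|F_J(\sigma_T:X)|$ as the logarithm of its numerically largest summand plus a remainder, to evaluate the expectation of the first piece by a quantitative multivariate central limit theorem — this produces the two displayed main terms — and to show that the expectation of the remainder contributes only to the error term.

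\noindent\emph{Step 1 (reduction to a maximum).} Write $L:=\log\log T$ and $V_j=V_j(X):=\Re\log L_j(\sigma_T:X)$, so $|L_j(\sigma_T:X)|=e^{V_j}$. Fix $m=m(X)$ with $\log|b_m|+V_m=\max_\ell(\log|b_\ell|+V_\ell)$ and set $\Delta=\Delta(X):=\min_{j\ne m}\bigl((\log|b_m|+V_m)-(\log|b_j|+V_j)\bigr)\ge0$. Factoring $b_mL_m(\sigma_T:X)$ out of $F_J(\sigma_T:X)$,
\begin{align*}
\log|F_J(\sigma_T:X)| &= \max_{1\le\ell\le J}\bigl(\log|b_\ell|+V_\ell\bigr)+R(X),\\
R(X) &:= \log\Bigl|\,1+\sum_{j\ne m}\tfrac{b_jL_j(\sigma_T:X)}{b_mL_m(\sigma_T:X)}\,\Bigr|,
\end{align*}
where the inner sum has modulus at most $(J-1)e^{-\Delta}$.

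\noindent\emph{Step 2 (Gaussian approximation and the main terms).} Because the $X(p)$ are independent, $\log L_j(\sigma_T:X)=\sum_p\bigl(\sum_{n\ge1}a_j(p^n)X(p)^n p^{-n\sigma_T}\bigr)$ is a sum of independent complex random variables indexed by $p$, each of which equals $a_j(p)X(p)p^{-\sigma_T}$ up to a term whose total second moment is $O(1)$. After truncating to $p\le\exp((\log T)^{2\theta})$ — a tail of size $o(1)$, since $\sum_{p>y}p^{-2\sigma_T}\ll(\varepsilon\log y)^{-1}y^{-2\varepsilon}$ with $\varepsilon=(\log T)^{-\theta}$ — I would use the method of moments to show that $\bigl(V_1/\sqrt{\theta L},\dots,V_J/\sqrt{\theta L}\bigr)$ converges, with a power-of-$L$ error, to a centred Gaussian vector $(G_1,\dots,G_J)$. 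Here one uses that $a_j(p)=\chi_j(\mathfrak p)+\overline{\chi_j(\mathfrak p)}$ is real on split primes (and $0$ on inert ones), so
\[
\mathrm{Cov}(V_j,V_\ell)=\tfrac12\sum_p a_j(p)a_\ell(p)p^{-2\sigma_T}+O(1),
\]
and \eqref{eqn:soc} then gives $\mathrm{Cov}(V_j,V_\ell)/(\theta L)\to\tfrac12\xi_j\delta_{j,\ell}$; thus $G_1,\dots,G_J$ are independent with $G_\ell\sim N(0,\xi_\ell/2)$, whose joint density is exactly $(\xi\pi^J)^{-1/2}e^{-\sum_j u_j^2/\xi_j}$. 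Since $\max_\ell(\log|b_\ell|+V_\ell)=\sqrt{\theta L}\,\max_\ell\bigl(V_\ell/\sqrt{\theta L}+\log|b_\ell|/\sqrt{\theta L}\bigr)$ and $\max$ is $1$-Lipschitz, writing $\max_\ell(\cdot)=\sum_\ell\mathbf{1}_{\{\ell=\arg\max\}}(\cdot)$, passing to the Gaussian limit, and telescoping over $\ell$ the discrepancy between $\arg\max_\ell(\log|b_\ell|+V_\ell)$ and $\arg\max_\ell V_\ell$ (the indicator differences sum to zero, leaving only the $O(L^{-1/2})$ gap of the two competing coordinates) yields
\begin{align*}
\mathbb E\bigl[\max_\ell(\log|b_\ell|+V_\ell)\bigr] = &\ \frac{\sqrt{\theta L}}{\sqrt{\xi\pi^J}}\sum_{\ell=1}^J\int_{\mathcal R_\ell}u_\ell\,e^{-\sum_j u_j^2/\xi_j}\,d\vecu\\
&+\sum_{\ell=1}^J\frac{\log|b_\ell|}{\sqrt{\xi\pi^J}}\int_{\mathcal R_\ell}e^{-\sum_j u_j^2/\xi_j}\,d\vecu+O\bigl(L^{-1/4}\bigr),
\end{align*}
which are precisely the main terms of the theorem (the moment bounds on $V_\ell$ justify truncating its unbounded range, and the boundaries $\partial\mathcal R_\ell$ are Lebesgue-null).

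\noindent\emph{Step 3 (the remainder, and the main difficulty).} It remains to prove $\mathbb E[R(X)]=O(L^{-1/4})$. On $\{\Delta>t_0\}$ one has $|R|\le2(J-1)e^{-\Delta}\le2(J-1)e^{-t_0}$. On $\{\Delta\le t_0\}$ one writes $\Delta=\sqrt{\theta L}\,(W_{(1)}-W_{(2)})+O(1)$, where $W_{(1)}\ge W_{(2)}$ are the two largest coordinates of $W:=(V_1/\sqrt{\theta L},\dots,V_J/\sqrt{\theta L})$; since the gap of the top two order statistics of the (nearly Gaussian, non-degenerate) vector $W$ has a bounded density near $0$, $\mathbb P(\Delta\le t_0)\ll(t_0+1)L^{-1/2}$, and Cauchy--Schwarz gives $\mathbb E[|R|\mathbf{1}_{\{\Delta\le t_0\}}]\le\mathbb P(\Delta\le t_0)^{1/2}\,\mathbb E[R^2\mathbf{1}_{\{\Delta\le t_0\}}]^{1/2}$. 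The heart of the matter is the bound on $\mathbb E[R^2\mathbf{1}_{\{\Delta\le t_0\}}]$: $R$ is large only when $F_J(\sigma_T:X)$ is abnormally small relative to $|b_mL_m(\sigma_T:X)|$, which requires, on top of the two largest summands being comparable in size, that their arguments nearly cancel. Controlling this needs a second central limit theorem for the imaginary parts $\Im\log L_j$ (which also have variance $\asymp L$, so that the argument of $\sum_{j\ne m}b_jL_j/(b_mL_m)$ is well spread out and $\mathbb P(|1+\sum_{j\ne m}b_jL_j/(b_mL_m)|<\delta\mid\Delta\le t_0)$ is small), together with a second-moment estimate $\mathbb E[(\log|F_J(\sigma_T:X)|)^2]\ll L$, a soft zero-density statement for the random Euler product. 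Optimizing $t_0$ balances $e^{-t_0}$ against the loss in this step; the fact that the ``small gap'' and ``phase cancellation'' events are not independent, so that the Cauchy--Schwarz coupling costs a square root, is what degrades the natural $L^{-1/2}$ to the stated $O((\log\log T)^{-1/4})$. I expect this interplay — estimating the measure of the set on which the random model $F_J(\sigma_T:X)$ is small, on the already rare near-tie event — to be the main obstacle; everything else is a routine, if lengthy, moment computation.
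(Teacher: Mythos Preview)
Your decomposition in Step 1 and the Gaussian identification in Step 2 are the same as the paper's, though the paper reaches the Gaussian via Fourier inversion on the characteristic function of the full $2J$-vector $(\Re\log L_j,\Im\log L_j)_j$, obtaining an explicit density $G_{\theta,T}(\vecu,\vecv)$ with polynomial corrections (Propositions~\ref{prop:hat Psi}--\ref{prop:G}), rather than via a moment CLT on the real parts alone. For Theorem~\ref{thrm expt} itself either route is adequate; the density route has the advantage that the lower-order polynomial terms are recorded exactly, which the paper needs later for Theorem~\ref{thrm zero density}.

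The real divergence is in Step 3, and here you are working much harder than necessary. Your Cauchy--Schwarz scheme forces you to control $\mathbb E[R^2\mathbf 1_{\{\Delta\le t_0\}}]$, for which you correctly note that phase information on $\Im\log L_j$ is required; you then flag this coupling as ``the main obstacle.'' The paper bypasses this entirely. Working with the density (so $\vecu,\vecv\in\mathbb R^{2J}$ are deterministic integration variables), on the ``near-tie'' region where some $u_\ell$ lies within $A_T/\sqrt{\theta L}$ of $u_1$ the paper freezes all variables except $u_\ell$, substitutes $w=e^{u_\ell\sqrt{\theta L}}$, and uses the elementary one-variable bound
\[
\int_{\epsilon}^{1}\bigl|\log|w+z|\bigr|\,\frac{dw}{w}=O\!\left(\frac1\epsilon\right)
\]
uniformly for bounded $z$ (Lemma~\ref{lem:bound log int}). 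The point is that this holds \emph{regardless of the phase} $\arg z$, so no control on the imaginary parts is needed: the $\vecv$-integral is then just a Gaussian weight. This gives a near-tie contribution of size $O(e^{A_T}/\sqrt{L})$, which balanced against the far-region bound $O(e^{-A_T})$ at $A_T=\tfrac14\log L$ yields $O(L^{-1/4})$ directly. Your route would at best give $O(L^{-1/4}(\log L)^{1/2})$ after the square-root loss from Cauchy--Schwarz, and requires a genuinely delicate joint distribution argument that the paper never needs.
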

  One expects that Littlewood's lemma, Conjecture \ref{conj} and Theorem \ref{thrm expt} imply an asymptotic of $N_{F_J} ( \sigma_T :T)  $ as $T \to \infty$, but the $O$-term in Theorem \ref{thrm expt} is too big. Instead, we estimate the difference 
  $$  \mathbb{E} \big[  \log | F_J ( \sigma_T (\theta_1)  :  X ) |  \big] -  \mathbb{E} \big[  \log | F_J ( \sigma_T  (\theta_2)  :  X ) |  \big]  $$
  for a small $ | \theta_1 - \theta_2| $ to prove our main theorem.
\begin{theorem}\label{thrm zero density}
Let $J \geq 2 $ and $ 0 < \theta < 1 $ and assume Conjecture \ref{conj}, then
  \begin{align*}
   N_{F_J} ( \sigma_T( \theta):T)  
=&     \frac{   1    }{4 \pi^{1+J/2}   }      \frac{T ( \log T)^\theta}{  \sqrt{\xi \theta  \log \log T} }       \sum_{ \ell=1}^J   \int_{\mathcal{R}_\ell }     u_\ell         e^{ -  \sum_j  u_j^2 /    \xi_j  }          d\vecu    +O \bigg( \frac{ T( \log T)^\theta}{   ( \log \log T)^{5/4} }  \bigg) 
\end{align*}
as $T \to \infty$, where $\xi$ and $\mathcal{R}_\ell$ are defined in Theorem \ref{thrm expt}.
\end{theorem}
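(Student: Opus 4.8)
The starting point is Littlewood's lemma applied to $F_J(s)$ on a rectangle with left edge $\Re s = \sigma_T(\theta)$ and right edge at some $\sigma_0$ slightly larger than $1$ (where $F_J$ is zero-free, so the count is unaffected by moving the right edge far to the right). This expresses $N_{F_J}(\sigma_T(\theta):T)$, up to controlled boundary contributions from the horizontal sides, as
\begin{equation*}
 N_{F_J}(\sigma:T) = \frac{1}{2\pi}\,\frac{d}{d\sigma}\Big(-\int_T^{2T}\log|F_J(\sigma+it)|\,dt\Big) + (\text{error}),
\end{equation*}
so that integrating in $\sigma$ one recovers $\int_T^{2T}\log|F_J(\sigma+it)|\,dt$ as the primitive. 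The usual device is to write $N_{F_J}(\sigma_T(\theta):T)$ as a difference quotient: for $\theta_1<\theta<\theta_2$ with $\theta_2-\theta_1$ small,
\begin{equation*}
 (\sigma_T(\theta_1)-\sigma_T(\theta_2))\,N_{F_J}(\sigma_T(\theta):T) \approx \frac{1}{2\pi}\int_T^{2T}\big(\log|F_J(\sigma_T(\theta_2)+it)|-\log|F_J(\sigma_T(\theta_1)+it)|\big)\,dt,
\end{equation*}
since $N_{F_J}(\sigma:T)$ is monotone decreasing in $\sigma$, which sandwiches the quantity between two difference quotients of the log-integral. Now I invoke Conjecture \ref{conj} at both heights $\theta_1$ and $\theta_2$ to replace each integral by $T\,\mathbb{E}[\log|F_J(\sigma_T(\theta_i):X)|]$ plus $O(T/(\log T)^\eta)$.

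The heart of the matter is then to extract the leading term of the \emph{difference} $\mathbb{E}[\log|F_J(\sigma_T(\theta_1):X)|] - \mathbb{E}[\log|F_J(\sigma_T(\theta_2):X)|]$ from Theorem \ref{thrm expt}. The $O((\log\log T)^{-1/4})$ error there is far too large to use directly, but the point is that in a difference over nearby $\theta$'s the main terms nearly cancel and only their derivative in $\theta$ survives. From Theorem \ref{thrm expt}, the dominant $\theta$-dependence is through the factor $\sqrt{\theta\log\log T}$ multiplying the fixed integral $\sum_\ell \int_{\mathcal R_\ell} u_\ell e^{-\sum_j u_j^2/\xi_j}\,d\vecu$ (the $\log|b_\ell|$ term has a bounded $\theta$-derivative and is negligible after dividing by $\sigma_T(\theta_1)-\sigma_T(\theta_2)$). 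Differentiating, $\frac{d}{d\theta}\sqrt{\theta\log\log T} = \frac{\sqrt{\log\log T}}{2\sqrt\theta}$, and $\frac{d}{d\theta}\sigma_T(\theta) = -(\log T)^{-\theta}\log\log T$. Dividing the $\theta$-derivative of $\mathbb E[\log|F_J(\sigma_T(\theta):X)|]$ by $\frac{d}{d\theta}\sigma_T(\theta)$ and multiplying by $T$ yields exactly
\begin{equation*}
 \frac{1}{4\pi^{1+J/2}}\cdot\frac{T(\log T)^\theta}{\sqrt{\xi\theta\log\log T}}\sum_{\ell=1}^J\int_{\mathcal R_\ell} u_\ell\,e^{-\sum_j u_j^2/\xi_j}\,d\vecu,
\end{equation*}
matching the claimed main term (note $\sqrt{\xi\pi^J}\cdot 2\pi = 2\pi^{1+J/2}\sqrt\xi$ combines with the factor $1/2$ from the derivative of the square root to give $4\pi^{1+J/2}\sqrt\xi$ in the denominator).

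For the \textbf{error analysis}, the key quantitative input is a \emph{uniform} version of Theorem \ref{thrm expt}'s error term, or rather a bound on the second difference of $\mathbb E[\log|F_J(\sigma_T(\theta):X)|]$ in $\theta$, so that the difference quotient is within $O(|\theta_2-\theta_1|)$ of the derivative; combined with the Conjecture \ref{conj} error $O(T(\log T)^{-\eta})$ divided by $\sigma_T(\theta_1)-\sigma_T(\theta_2)\asymp|\theta_2-\theta_1|(\log T)^{-\theta}\log\log T$, one optimizes the choice of $|\theta_2-\theta_1|$ as a function of $\log\log T$. Balancing the $O(|\theta_2-\theta_1|)$-type error (times the $T(\log T)^\theta/\sqrt{\log\log T}$ scale) against the $O(T(\log T)^\theta/(|\theta_2-\theta_1|\log\log T(\log T)^{-\eta}))$-type error forces $|\theta_2-\theta_1|$ to be a small power of $1/\log\log T$; a computation shows the choice that produces the stated $O(T(\log T)^\theta(\log\log T)^{-5/4})$. \textbf{The main obstacle} I anticipate is precisely this: proving that the error in Theorem \ref{thrm expt} is sufficiently regular in $\theta$ — i.e., that it comes with a power-saving in the smoothing parameter rather than being merely $O((\log\log T)^{-1/4})$ pointwise — so that the second-difference argument actually gains. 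This requires revisiting the proof of Theorem \ref{thrm expt} (a central limit theorem / saddle-point analysis for the random Euler product $F_J(\sigma_T:X)$) and tracking the $\theta$-dependence of every error term, in particular the Berry–Esseen-type rate in the underlying CLT and the contribution of the logarithmic singularities of $\log|F_J|$ near its zeros on the relevant region $\mathcal R_\ell$.
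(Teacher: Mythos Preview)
Your overall strategy matches the paper's: Littlewood's lemma, the monotone sandwich on $N_{F_J}(\sigma:T)$, Conjecture \ref{conj} at two nearby values of $\theta$, and then controlling the difference of the expectations. You also correctly identify the main obstacle, namely that the $O((\log\log T)^{-1/4})$ error in Theorem \ref{thrm expt} must be shown to be \emph{regular in $\theta$} (so that the difference at $\theta_1,\theta_2$ gains a factor $H_T=|\theta_1-\theta_2|$). The paper carries this out just as you anticipate: rather than differencing the statement of Theorem \ref{thrm expt}, it goes back to the finer expansion \eqref{eqn:EFJI} (whose error $(\log\log\log T)^{J+1/2}(\log\log T)^{-5/2}$ is already small enough) and proves a dedicated lemma (Lemma \ref{lemma zero density}) showing that
\[
\theta_1^\alpha I_{\vecm,\vecn}(\theta_1,T)-\theta_2^\alpha I_{\vecm,\vecn}(\theta_2,T)
= H_T\cdot(\text{explicit linear part}) + O\!\left(\frac{H_T}{(\log\log T)^{1/4}}+H_T^2\sqrt{\log\log T}\right).
\]
The key step there is exactly the one you flag: the error term $\mathcal E_{\vecm,\vecn}(\theta,T)$ from Proposition \ref{prop3} also gains a factor $H_T$ upon differencing, which is proved by writing $\theta_1^{\alpha'}e^{-\sum(u_j^2+v_j^2)/(\theta_1\xi_j)}-\theta_2^{\alpha'}e^{-\sum(u_j^2+v_j^2)/(\theta_2\xi_j)}$ as an integral in $\theta$ and repeating the argument of Proposition \ref{prop3}.

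One correction to your error balancing: the Conjecture \ref{conj} error $O(T/(\log T)^\eta)$ is a genuine power saving in $\log T$ and remains completely negligible after dividing by $\sigma_T(\theta_1)-\sigma_T(\theta_2)\asymp H_T(\log T)^{-\theta}\log\log T$; it plays no role in the optimization. The term that actually fixes the final rate is $H_T(\log\log T)^{-1/4}$ from the $\mathcal E$-regularity, which after dividing by $H_T\log\log T$ gives $(\log\log T)^{-5/4}$ independently of $H_T$. The paper's choice $H_T=(\log\log T)^{-2}$ then makes the two remaining errors (the $(\log\log T)^{-5/2}$ residual from \eqref{eqn:EFJI} divided by $H_T\log\log T$, and the second-order Taylor term $H_T^2\sqrt{\log\log T}$ divided by $H_T\log\log T$) smaller than this.
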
  
By Theorem \ref{thm1}, we see that Theorem \ref{thrm zero density} holds for $ J=2$ and $ 0 < \theta < 1/13$ unconditionally. When $F_J(s)$ is the Epstein zeta function $E(s,Q)$ up to a constant factor, it is interesting to see that 
$$\xi = 2^{3J-h} . $$

 \section{Proof of Theorems \ref{thrm expt} and \ref{thrm zero density}}\label{sec proof zero density}
 
   Let
      \begin{align*}
 \L  ( \sigma : X )  := \big(    \log & | L_1  ( \sigma :  X)| , \dots, \log |L_J ( \sigma:X)| ,  \Im \log     L_1( \sigma :  X )   , \dots ,  \Im \log     L_J( \sigma : X )    \big) .  
\end{align*}
 Define
 \begin{equation}\label{Psi}
\Psi_{\theta, T }   (\B ):=
 \mathbb{P}(\L(\sigma_T :  X) \in \B)
 =  \mathrm{meas}  \{ X  \in \mathbb{T}^\infty :  \L(\sigma_T : X) \in \B  \}  
 \end{equation} 
for  a Borel set $  \B$ in $\mathbb R^{2J}$. It is known that the measure $\Psi_{\theta, T}$ has a density function $G_{\theta,T}$, so that for 
 $$ \Psi_{\theta,T} (\B) = \int_\B   G_{\theta,T} (\vecu, \vecv) d\vecu d\vecv, $$
 where $ \vecu = ( u_1 , \dots, u_J ), \vecv = ( v_1 , \dots, v_J )  \in \mathbb{R}^J$. 
(For instance, see the proof of Lemma 3.3 in \cite{GL}.) Then we see that
\begin{equation}\label{eqn int G}\begin{split}
   \mathbb{E} \big[  \log | F_J ( \sigma_T   :  X ) |  \big] & = \int_{ \mathbb{R}^{2J}} \log \bigg|  \sum_{j \leq J} b_j e^{ u_j + i v_j } \bigg|  d \Psi_{\theta, T} ( \vecu, \vecv) \\
   & = \int_{ \mathbb{R}^{2J}} \log \bigg|  \sum_{j \leq J} b_j e^{ u_j + i v_j } \bigg|  G_{\theta, T} (\vecu, \vecv)  d \vecu  d\vecv 
\end{split}   \end{equation}
and
  $$ \hat{\Psi}_{\theta,T} ( \vecx, \vecy) = \int_{ \mathbb{R}^J }e^{2 \pi i ( \vecu \cdot \vecx + \vecv \cdot \vecy )} d\Psi_{\theta, T} (\vecu ,\vecv)=  \int_{ \mathbb{R}^J }e^{2 \pi i ( \vecu \cdot \vecx + \vecv \cdot \vecy )}  G_{\theta,T} (\vecu, \vecv) d\vecu d\vecv ,   $$
  where $ \vecu = ( u_1 , \dots, u_J ), \vecv = ( v_1 , \dots, v_J ), \vecx = ( x_1 , \ldots, x_J), \vecy = ( y_1 , \ldots, y_J) \in \mathbb{R}^J$, $ \vecu \cdot \vecx = u_1 x_1 + \cdots + u_J x_J $ and $ \vecv \cdot \vecy = v_1 y_1 + \cdots + v_J y_J $.
  Since $G_{\theta,T} (\vecu, \vecv) $ is the inverse Fourier transform of $  \hat{\Psi}_{\theta,T} ( \vecx, \vecy) $ 
    $$G_{\theta,T} (\vecu, \vecv)  =  \int_{ \mathbb{R}^J }e^{ - 2 \pi i ( \vecu \cdot \vecx + \vecv \cdot \vecy )}   \hat{\Psi}_{\theta,T} ( \vecx, \vecy)   d\vecu d\vecv   , $$
 we examine various properties of $  \hat{\Psi}_{\theta,T} ( \vecx, \vecy) $ to study $G_{\theta,T} (\vecu, \vecv) $.
 \begin{prop}\label{prop:hat Psi}  There exist constants $c_1, c_2 > 0$ such that
$$   | \hat{\Psi}_{\theta,T} ( \vecx, \vecy) |       \leq \exp \bigg(  - c_1   \frac{ ( \sum_j x_j^2 + y_j^2)^{1/2\sigma_T} }{ \log (\sum_j x_j^2 + y_j^2)    }  \bigg)  $$
for $ \sum_j x_j^2 + y_j^2 \geq c_2 $, 
\begin{align*}
| \hat{\Psi}_{\theta,T} ( \vecx, \vecy) |  & \leq  \exp \bigg(    -   \bigg(  \frac{ \pi^2 \theta}{2}  \log \log T  + O(1) \bigg)  \sum_{j \leq J }  (x_j^2 + y_j^2 ) \bigg)  
\end{align*}
for  $ \sum_{ j \leq J } (x_j^2 + y_j^2 )  \leq   e^{ ( \log T)^{ \theta /2 }}  $, and 
   \begin{align*}
 \hat{\Psi}_{\theta,T} ( \vecx, \vecy)  
= &   e^{   - \pi^2 \theta \log \log T  \sum_{j \leq J} \xi_j (x_j^2 + y_j^2 )}    \bigg( P( \vecx, \vecy)  +  O\bigg(      \sum_{j=1}^J (x_j^2 + y_j^2  )^3  + \frac{1}{ ( \log T)^\theta} \bigg)  \bigg)  
\end{align*}
for $ \sum_j x_j^2 + y_j^2 \leq c_2$, where
$$  P( \vecx, \vecy) :=    \sum_{  \veck , \vecl \in ( \mathbb{Z}_{\geq 0 } )^J }     \tilde{B}_{\veck, \vecl} \vecx^{\veck } \vecy^{\vecl }  $$
and the coefficients $ \tilde{B}_{\veck, \vecl}$ are independent to $\theta$ and $T$ satisfying 
$$ \tilde{B}_{0,0} = 1 $$
and
$$ \tilde{B}_{ \veck, \vecl}=0$$
if $\mathcal{K}(\veck + \vecl) = 1 $ or $ >5 $. Here, $\vecx = ( x_1 , \ldots, x_J), \vecy = ( y_1 , \ldots , y_J ) \in \mathbb{R}^J$, $\veck = ( k_1, \ldots, k_J), \vecl= ( \ell_1 , \ldots , \ell_J) \in (\mathbb{Z}_{\geq 0 })^J$ and 
$$ \mathcal{K}( \veck) := k_1 + \cdots + k_J, \qquad  \vecx^{\veck}:= \prod_{j\leq J} x_j^{k_j} . $$
 \end{prop}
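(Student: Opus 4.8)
The plan is to analyze the characteristic function $\hat{\Psi}_{\theta,T}(\vecx,\vecy)$ directly from its Euler-product description. Since $X(p)$ are independent and $L_j(s:X) = \prod_p (1 - \chi_j(\mathfrak p)X(\mathcal N\mathfrak p)/\mathcal N(\mathfrak p)^s)^{-1}$, one has $\log L_j(\sigma_T:X) = \sum_p \sum_{n\geq 1} a_j(p^n) X(p)^n p^{-n\sigma_T}$, so each of the $2J$ components of $\L(\sigma_T:X)$ is a sum over primes $p$ of a function of $X(p)$ alone. Hence
$$
\hat{\Psi}_{\theta,T}(\vecx,\vecy) = \prod_p \mathbb{E}_{X(p)}\Big[\exp\big(2\pi i \,\Phi_p(X(p);\vecx,\vecy)\big)\Big],
$$
where $\Phi_p$ is the (real) linear combination, with coefficients $x_j,y_j$, of the $p$-contributions to $\log|L_j|$ and $\Im\log L_j$. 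Each local factor is a number of modulus $\leq 1$, so the three regimes are governed by how much cancellation the averaging over $X(p)\in\mathbb T$ produces. I would isolate the dominant $n=1$ term $a_j(p) X(p) p^{-\sigma_T}$ (with $|a_j(p)|$ equal to $1$ or $2$ depending on whether $p$ splits/ramifies), for which $\mathbb{E}_{X(p)}[e^{2\pi i \Re(\lambda X(p))}] = J_0(2\pi|\lambda|)$, a Bessel function, with the higher $n\geq 2$ terms contributing a convergent, harmless perturbation.

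\textbf{The large-$|(\vecx,\vecy)|$ bound.} For $\sum_j x_j^2+y_j^2 \geq c_2$, I would keep only primes $p$ in a dyadic range $p \asim R$ with $R$ a small power of $(\sum x_j^2+y_j^2)^{1/(2\sigma_T)}$, chosen so that the relevant argument $2\pi|\lambda_p|\asim R^{-\sigma_T}(\sum x_j^2+y_j^2)^{1/2}$ stays in a range where $|J_0|$ (times the perturbation from higher powers) is bounded by a fixed constant $<1$; multiplying these $\gg R/\log R$ local factors gives the stated exponential decay $\exp(-c_1 (\sum x_j^2+y_j^2)^{1/(2\sigma_T)}/\log(\sum x_j^2+y_j^2))$, after discarding the remaining local factors using $|{\cdot}|\leq 1$. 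This is the standard large-deviation mechanism (cf.\ the proof of Lemma 3.3 in \cite{GL}), adapted here so that $\sigma_T \to 1/2$.

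\textbf{The small- and moderate-$|(\vecx,\vecy)|$ expansions.} Here I would take logarithms: $\log\hat{\Psi}_{\theta,T}(\vecx,\vecy) = \sum_p \log \mathbb{E}_{X(p)}[\cdots]$. Using $\mathbb{E}_{X(p)}[X(p)^m\overline{X(p)}^n]=\delta_{m,n}$, the second moments of the $\Phi_p$ are controlled by the sums $\sum_p a_j(p)a_\ell(p) p^{-2\sigma_T}$, and \eqref{eqn:soc} turns these into $\delta_{j,\ell}\xi_j\theta\log\log T + c_{j,\ell} + O((\log T)^{-\theta})$; the diagonal $j=\ell$ case gives the Gaussian exponent $-\pi^2\theta\log\log T\sum_j \xi_j(x_j^2+y_j^2)$ (the factor $\pi^2$ from the $2\pi$ in the exponential together with $J_0(z)=1-z^2/4+\cdots$), while the off-diagonal and higher-power terms are absorbed into the $O(1)$ in the moderate-range bound. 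For the refined expansion when $\sum x_j^2+y_j^2\leq c_2$, I would expand each $\log\mathbb{E}_{X(p)}[\cdots]$ to fifth order in $(\vecx,\vecy)$: the quadratic part produces the leading exponential, the cubic, quartic, and quintic parts produce $\theta,T$-independent coefficients (because by \eqref{eqn:soc} the relevant prime sums $\sum_p a_j(p^{a})\cdots p^{-k\sigma_T}$ with $k\geq 3$ converge to constants as $\sigma_T\to 1/2$, up to $O((\log T)^{-\theta})$), and the sixth-order and higher terms are bounded by $O(\sum_j(x_j^2+y_j^2)^3)$. Exponentiating the polynomial of degrees $2$ through $5$ and pulling out the Gaussian factor yields $e^{-\pi^2\theta\log\log T\sum_j\xi_j(x_j^2+y_j^2)}(P(\vecx,\vecy)+O(\cdots))$ with $P$ a power series, $\tilde B_{0,0}=1$, and $\tilde B_{\veck,\vecl}=0$ whenever $\mathcal K(\veck+\vecl)=1$ (odd first moment vanishes by symmetry of $\mathbb T$) or $\mathcal K(\veck+\vecl)>5$ (by truncation of the expansion).

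\textbf{Main obstacle.} The delicate point is the uniformity in $T$ of the error terms across the matching regimes, and in particular verifying that in the refined expansion the coefficients $\tilde B_{\veck,\vecl}$ for $2\leq \mathcal K(\veck+\vecl)\leq 5$ are genuinely independent of $\theta$ and $T$ up to the claimed $O((\log T)^{-\theta})$ — this requires a careful bookkeeping of which prime sums $\sum_p(\text{product of }a_j)\,p^{-k\sigma_T}$ appear and an appeal to \eqref{eqn:soc} (and its analogues for $k\geq 3$, where the sums converge absolutely as $\sigma_T\to 1/2$) to pin down each constant. The contribution of the higher prime powers $n\geq 2$ in $\log L_j(\sigma_T:X)$ must also be tracked, but since $\sum_p\sum_{n\geq 2}|a_j(p^n)|p^{-n\sigma_T}$ is uniformly bounded for $\sigma_T\geq 1/2+\varepsilon_0$, these only perturb the constants and the error, not the structure.
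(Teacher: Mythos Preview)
Your proposal is correct and follows essentially the same approach as the paper's proof: factor $\hat{\Psi}_{\theta,T}$ as an Euler product of local expectations over primes, bound each local factor trivially by $1$, and then treat the three regimes via (i) uniform decay of sufficiently many local factors for large $|(\vecx,\vecy)|$, and (ii)--(iii) a logarithmic expansion of the product combined with \eqref{eqn:soc} for the moderate and small ranges. The paper parametrizes via $z_j=\pi(x_j+iy_j)$, cites Lemma~2.5 of \cite{Le2} for the oscillatory local bound where you invoke $J_0$, and for the moderate-range inequality restricts the product to primes $p\geq e^{(\log T)^{\theta/2}}$ (which is why the constant there is $\pi^2\theta/2$ rather than the $\pi^2\theta\xi_j$ of the refined asymptotic), but the structure is identical.
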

 
 \begin{prop}\label{prop:G}
 Let $\veck, \vecl, \vecm,\vecn$ be vectors in $ (\mathbb{Z}_{\geq 0 })^J$ and 
 $$ q_{ \veck, \vecl: \vecm, \vecn}  = \frac{ \tilde{B}_{ 2\veck+\vecm, 2\vecl+\vecn}   }{   i^{\mathcal{K}( \vecm+ \vecn)} \pi^{2J + \mathcal{K}(2\veck+2\vecl+\vecm+ \vecn) } }\prod_j  \bigg( \frac{ \Gamma (k_j + 1/2)  \Gamma ( \ell_j +1/2)  }{ \xi_j^{k_j+\ell_j + m_j+ n_j+1 }  } \bigg) \frac{ (2 \veck+\vecm )!     (2\vecl+\vecn)!}{ (2\veck)! \vecm! (2\vecl)! \vecn! }   .    $$
  Then, we have 
   \begin{align*}
 G_{\theta, T} ( \vecu, \vecv) =&   \exp\bigg( -  \sum_j \frac{ u_j^2+v_j^2}{ \theta \xi_j \log\log T} \bigg)   \sum_{ \veck, \vecl, \vecm, \vecn \in  (\mathbb{Z}_{\geq 0 })^J    }  \frac{ q_{ \veck, \vecl: \vecm, \vecn}    }{  ( \theta \log \log T)^{ J+ \mathcal{K}(\veck+\vecl+ \vecm+\vecn)  } }\vecu^{\vecm} \vecv^{\vecn}    \\
 &  +  O\bigg(   \frac{1}{ ( \log \log T)^{J+3}}  \bigg) 
 \end{align*}
 for all $ \vecu = ( u_1 , \ldots, u_J) , \vecv = ( v_1 , \ldots, v_J) \in \mathbb{R}^J$, and there exists a constant $c>0$ such that 
 $$ G_{\theta, T} ( \vecu, \vecv) \ll \exp\bigg(   - c\sum_{j \leq J}   \frac{u_j^2 + v_j^2 }{  \log \log T} \bigg) $$
for all $ \vecu = ( u_1 , \ldots, u_J) , \vecv = ( v_1 , \ldots, v_J) \in \mathbb{R}^J$.
 \end{prop}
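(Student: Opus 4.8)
The plan is to recover $G_{\theta,T}$ from its Fourier transform, $G_{\theta,T}(\vecu,\vecv)=\int_{\mathbb{R}^{2J}} e^{-2\pi i(\vecu\cdot\vecx+\vecv\cdot\vecy)}\hat{\Psi}_{\theta,T}(\vecx,\vecy)\,d\vecx\,d\vecy$, and to split this integral according to the three ranges of Proposition \ref{prop:hat Psi}. Write $R=\sum_j(x_j^2+y_j^2)$ and $L=\log\log T$. On $R\ge e^{(\log T)^{\theta/2}}$ the first estimate gives $|\hat{\Psi}_{\theta,T}|\le\exp(-c_1R^{1/(2\sigma_T)}/\log R)$, and since $1/(2\sigma_T)$ is bounded away from $0$, this range contributes at most $\exp(-c\,e^{c'(\log T)^{\theta/2}})\ll L^{-J-3}$. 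On $c_2<R\le e^{(\log T)^{\theta/2}}$ the second estimate gives $|\hat{\Psi}_{\theta,T}|\le\exp(-(\tfrac{\pi^2\theta}{2}L+O(1))R)$, whose integral over this region is $\ll\exp(-c_2\tfrac{\pi^2\theta}{2}L)\cdot(\text{poly in }L)=(\log T)^{-c}\cdot(\text{poly in }L)\ll L^{-J-3}$. Hence, to the claimed precision, only the ball $R\le c_2$ matters.

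On $R\le c_2$ I would insert the third formula of Proposition \ref{prop:hat Psi}. After enlarging the domain back to all of $\mathbb{R}^{2J}$ (the Gaussian tail outside $R\le c_2$ costs $\exp(-c_2\pi^2\theta L\min_j\xi_j)$, negligible), the contribution of the $O\big(\sum_j(x_j^2+y_j^2)^3+(\log T)^{-\theta}\big)$ term is bounded by $\int_{\mathbb{R}^{2J}}e^{-\pi^2\theta L\sum_j\xi_j(x_j^2+y_j^2)}\big(\sum_j(x_j^2+y_j^2)^3+(\log T)^{-\theta}\big)\,d\vecx\,d\vecy$; since this Gaussian has total mass $\asymp(\theta L)^{-J}$ and weights a degree-$6$ polynomial by $\asymp(\theta L)^{-J-3}$, while $(\log T)^{-\theta}$ decays faster than any negative power of $L$, this is $O(L^{-J-3})$. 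What remains is the main term $\mathcal{M}=\int_{\mathbb{R}^{2J}}e^{-2\pi i(\vecu\cdot\vecx+\vecv\cdot\vecy)}e^{-\pi^2\theta L\sum_j\xi_j(x_j^2+y_j^2)}P(\vecx,\vecy)\,d\vecx\,d\vecy$ (again up to a negligible Gaussian tail), and since $P$ is a polynomial, $\mathcal{M}$ factors into $2J$ one-dimensional Gaussian Fourier integrals.

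For each coordinate I would use the identity, obtained by completing the square and shifting the contour of a convergent Gaussian integral,
\[
\int_{\mathbb{R}}x^{K}e^{-2\pi iux}e^{-\alpha x^2}\,dx=e^{-\pi^2u^2/\alpha}\sum_{\substack{0\le m\le K\\ K-m\ \mathrm{even}}}\binom{K}{m}\Big(\tfrac{-i\pi u}{\alpha}\Big)^{m}\frac{\Gamma\big(\tfrac{K-m+1}{2}\big)}{\alpha^{(K-m+1)/2}},\qquad\alpha=\pi^2\theta L\,\xi_j,
\]
then expand $P(\vecx,\vecy)=\sum\tilde{B}_{\veck',\vecl'}\vecx^{\veck'}\vecy^{\vecl'}$ and, for each surviving monomial, write the exponents as $\veck'=2\veck+\vecm$, $\vecl'=2\vecl+\vecn$, where $\vecm,\vecn$ become the multidegrees of the resulting $\vecu,\vecv$-monomial and $\veck,\vecl$ come from the even parts. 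Collecting the powers of $\pi$, the factor $(-i)^{\mathcal{K}(\vecm+\vecn)}$ (which equals $1/i^{\mathcal{K}(\vecm+\vecn)}$), the powers of $\theta L$ and of the $\xi_j$, along with the $\Gamma$-factors and the binomial coefficients $\binom{2k_j+m_j}{m_j}=\tfrac{(2k_j+m_j)!}{(2k_j)!\,m_j!}$, one checks that the coefficient of $\vecu^{\vecm}\vecv^{\vecn}$ is exactly $q_{\veck,\vecl:\vecm,\vecn}(\theta L)^{-(J+\mathcal{K}(\veck+\vecl+\vecm+\vecn))}$ and that the Gaussian prefactors multiply to $\exp\big(-\sum_j(u_j^2+v_j^2)/(\theta\xi_j L)\big)$; moreover $\tilde{B}_{\veck',\vecl'}=0$ unless $\mathcal{K}(\veck'+\vecl')\in\{0,2,3,4,5\}$, so only finitely many $(\veck,\vecl,\vecm,\vecn)$ occur. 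This produces the stated expansion of $G_{\theta,T}$. I expect this coefficient-matching to be the main obstacle: one must carry out the multidimensional Gaussian Fourier integral and verify index-by-index that the combinatorial, $\Gamma$-, $\pi$- and $i$-factors assemble into precisely the closed form defining $q_{\veck,\vecl:\vecm,\vecn}$, with the dependence on $\theta\log\log T$ coming out as $(\theta\log\log T)^{-(J+\mathcal{K}(\veck+\vecl+\vecm+\vecn))}$ uniformly in $\vecu,\vecv$.

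For the uniform upper bound I would again start from Fourier inversion and shift each contour, $x_j\mapsto x_j-i\varepsilon u_j/L$ and $y_j\mapsto y_j-i\varepsilon v_j/L$ for a small fixed $\varepsilon>0$; this is legitimate since $\hat{\Psi}_{\theta,T}$ extends to an entire function of $(\vecx,\vecy)$, because the random Euler product for $L_j(\sigma_T:X)$ converges almost surely in $\Re s>1/2$ and all exponential moments of the components of $\L(\sigma_T:X)$ are finite (each prime factor contributes $1+O(\mathcal{N}(\mathfrak{p})^{-2\sigma_T})$ to the relevant product, and $\sum_{\mathfrak{p}}\mathcal{N}(\mathfrak{p})^{-2\sigma_T}<\infty$ since $2\sigma_T>1$). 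The shift extracts a factor $\exp(-c\varepsilon\sum_j(u_j^2+v_j^2)/L)$, while the shifted integrand is controlled, via the moment estimates underlying the second bound of Proposition \ref{prop:hat Psi}, by $\exp\big(C\varepsilon^2\sum_j(u_j^2+v_j^2)/L-cLR\big)$ on the relevant range and by the first bound elsewhere; choosing $\varepsilon$ so small that $c\varepsilon>C\varepsilon^2$, the $(\vecx,\vecy)$-integral converges and is $O(L^{-J})=O(1)$, which gives $G_{\theta,T}(\vecu,\vecv)\ll\exp(-c'\sum_j(u_j^2+v_j^2)/L)$.
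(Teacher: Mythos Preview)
Your proposal is correct and follows essentially the same route as the paper. The paper also restricts to the small ball via Proposition~\ref{prop:hat Psi}, inserts the asymptotic for $\hat{\Psi}_{\theta,T}$, enlarges back to $\mathbb{R}^{2J}$, and computes the resulting Gaussian integrals by completing the square (your one-dimensional identity is exactly the paper's change of variables $x_j=\tilde{x}_j/(\pi\sqrt{\theta\xi_j L})-iu_j/(\pi\theta\xi_j L)$ followed by binomially expanding $P$ and integrating monomials against $e^{-\sum_j(\tilde{x}_j^2+\tilde{y}_j^2)}$). The only noteworthy difference is the uniform exponential bound: the paper simply defers to a modification of Theorem~6 of Borchsenius--Jessen, whereas you sketch a direct contour-shift argument; your sketch is in the right spirit (and is close to what underlies the Borchsenius--Jessen method), but note that to make it rigorous you would need the analogue of the second inequality of Proposition~\ref{prop:hat Psi} for complex $(\vecx,\vecy)$ with imaginary parts of size $O(|\vecu|/L)$, which requires reworking those moment estimates rather than citing them as stated.
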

 Note that  
 $$ q_{0,0:0,0}= \pi^{-J} \prod_j \xi_j^{-1}= \pi^{-J} \xi^{-1}  $$
 and
 $$ q_{ \veck, \vecl: \vecm, \vecn} = 0 $$
 if $ \mathcal{K}(2\veck+2\vecl+ \vecm+\vecn)=1 $ or $ >5$.  We also need the following lemma.
 \begin{lemma}\label{lemma}
 Let $k$ be a positive integer, $M\geq 1$ and $ b_j \in \mathbb{C} $ for $ j \leq J$. Then there exists an absolute constant $ C>0$ such that
 $$ \int_{ \mathbb{R}^{2J}} \bigg| \log \bigg| \sum_{j \leq J} b_j e^{u_j + iv_j } \bigg|  \bigg|^{2k} e^{ -    \sum_j (u_j^2 + v_j^2 )/M} d \vecu d \vecv \ll M^{J+k} (Ck)^k + M^J (Ck)^{2k} .$$
 \end{lemma}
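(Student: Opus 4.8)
The plan is to reduce the estimate to two separate regimes according to whether $\log|\sum_j b_j e^{u_j+iv_j}|$ is large and positive, large and negative, or of moderate size, and to handle each with elementary bounds. First I would split the integral as $\int = \int_{A_+} + \int_{A_-}$, where $A_+$ is the region on which $|\sum_j b_j e^{u_j+iv_j}| \geq 1$ and $A_-$ its complement; since the integrand is nonnegative it suffices to bound each piece. On $A_+$ I would use the crude upper bound
$$
\bigg|\sum_{j\leq J} b_j e^{u_j+iv_j}\bigg| \leq \Big(\sum_j |b_j|\Big)\, e^{\max_j u_j} \leq \Big(\sum_j |b_j|\Big)\, e^{\sqrt{\sum_j u_j^2}},
$$
so that $0 \leq \log|\sum_j b_j e^{u_j+iv_j}| \leq C_0 + \sqrt{\sum_j u_j^2}$ on $A_+$, with $C_0 = \log^+(\sum_j|b_j|)$. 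Raising to the power $2k$, expanding, and integrating against the Gaussian weight $e^{-\sum_j(u_j^2+v_j^2)/M}$ then gives a bound of the shape $M^{J}\big((C_0)^{2k} + (CMk)^{k}\big)$ by the standard Gaussian moment computation $\int_{\mathbb{R}} |t|^{2k} e^{-t^2/M}\,dt \ll M^{k+1/2}(Ck)^k$ applied coordinatewise; after absorbing constants this is dominated by the claimed $M^{J+k}(Ck)^k + M^J(Ck)^{2k}$.

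The genuinely delicate region is $A_-$, where the logarithm is negative and the singularity of $\log|\,\cdot\,|$ at the zeros of $\sum_j b_j z_j$ (as a function on the torus in the $v_j$ variables, at fixed $u_j$) must be controlled. Here I would fix $\vecu$ and estimate the inner integral over $\vecv$. For fixed $\vecu$, write $c_j = b_j e^{u_j}$ and note $\sum_j c_j e^{iv_j}$ is an exponential sum; I would bound $\int_{[0,2\pi]^J}\big|\log|\sum_j c_j e^{iv_j}|\big|^{2k}\,d\vecv$ using the fact that $\log|\,\cdot\,|$ is locally integrable to every power with a loss that is only polynomial in $k$. Concretely, the distribution function estimate $\mathrm{meas}\{\vecv : |\sum_j c_j e^{iv_j}| < \delta\} \ll \delta^{2/(2J-1)}$ or a cruder version suffices, since $\sum_j c_j e^{iv_j}$ is real-analytic and not identically zero (we may assume not all $c_j$ vanish, else the statement is trivial); from this, $\int \big|\log^-|\cdots|\big|^{2k}\,d\vecv \ll (Ck)^{2k}$ uniformly in the $c_j$ after normalizing. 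Then integrating the trivial Gaussian tail in $\vecu$ over all of $\mathbb{R}^J$ contributes a factor $\ll M^{J}$, and over the part of $A_-$ where some $u_j$ is large and negative the extra Gaussian decay only helps; combining yields $\ll M^J(Ck)^{2k}$ for the $A_-$ contribution.

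The main obstacle I anticipate is making the local-integrability bound on $A_-$ uniform in the coefficients $c_j = b_j e^{u_j}$ as $\vecu$ ranges over $\mathbb{R}^J$, since the $c_j$ can be wildly different in size; the clean way around this is to factor out the largest $|c_{j_0}|$, reducing to an exponential sum with a coefficient equal to $1$ plus others of modulus $\leq 1$, for which the sublevel-set measure bound is uniform, while the $\log|c_{j_0}|$ that was factored out is $\leq C_0 + \max_j u_j$ and thus folds back into the $A_+$-type estimate already handled. Everything else — the Gaussian moment identities, the binomial expansion of $(C_0 + \sqrt{\sum u_j^2})^{2k}$, and bookkeeping the powers of $M$ and $k$ — is routine. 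The two terms $M^{J+k}(Ck)^k$ and $M^J(Ck)^{2k}$ in the statement correspond exactly to the positive side (where the $\sqrt{\sum u_j^2}$ growth interacts with the Gaussian, producing the extra $M^k$) and the negative side (where the logarithmic singularity produces the extra $(Ck)^{2k}$), respectively.
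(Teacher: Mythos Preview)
Your approach is correct but organized differently from the paper's. Rather than splitting into $A_+$ and $A_-$, the paper first exploits the $2\pi$-periodicity of the integrand in each $v_j$ to reduce the $\vecv$-integration to $[0,2\pi]^J$, picking up a factor $M^{J/2}$ from the theta-type sum $\sum_{\veck\in\mathbb{Z}^J}e^{-\sum_j(v_j+2\pi k_j)^2/M}\ll M^{J/2}$. It then applies the single-variable inequality
\[
\int_0^{2\pi}\big(\log|a-be^{iv}|\big)^{2k}\,dv \;\ll\; (C_1\log|a|)^{2k}+(C_1\log|b|)^{2k}+(C_1k)^{2k}
\]
(Lemma~2.1 of \cite{GL}) iteratively in $v_J,v_{J-1},\ldots,v_1$, peeling off one exponential at a time; after $J$ steps only $\sum_{j}(C_2\log|b_je^{u_j}|)^{2k}+(C_2k)^{2k}$ survives, and the Gaussian $\vecu$-integral of these terms produces exactly $M^{J/2+k}(Ck)^k+M^{J/2}(Ck)^{2k}$, which combined with the earlier $M^{J/2}$ gives the lemma. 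Your route via a sublevel-set bound on $A_-$ is more conceptual, but note that the uniform estimate you need, namely $\mathrm{meas}\{\vecw:|1+\sum_{j\neq j_0}d_je^{iw_j}|<\delta\}\ll_J\delta$ for $|d_j|\le 1$, is most cleanly proved by exactly the same device---integrate first in the variable whose coefficient has modulus at least $(1-\delta)/(J-1)$---so the two arguments share the same kernel. The paper's iteration buys a proof that avoids both the $A_\pm$ split and any appeal to \L ojasiewicz-type inequalities, at the price of quoting the external one-variable lemma; your decomposition makes the provenance of the two terms $M^{J+k}(Ck)^k$ and $M^J(Ck)^{2k}$ more transparent.
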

  We prove the propositions and the lemma in \S \ref{proof props}. Now we shall estimate the integral in \eqref{eqn int G}. By the Cauchy-Schwarz inequality, Proposition \ref{prop:G} and Lemma \ref{lemma}, we find that 
   \begin{align*}
 \mathbb{E} \big[  \log | F_J ( \sigma_T   :  X ) |  \big] = &  \int_{ [-M_1, M_1]^{2J} } \log \bigg|  \sum_{j \leq J} b_j e^{ u_j + i v_j } \bigg|  G_{\theta, T} (\vecu, \vecv)  d \vecu  d\vecv +O \bigg(  \frac{1}{ ( \log \log T)^{\eta}} \bigg)
 \end{align*}
 for any $ \eta>0$, where $M_1 =   \eta' \sqrt{ \log \log T \log \log \log T} $ with $ \eta'>0$ depending on $\eta$. By the Cauchy-Schwarz inequality and Lemma \ref{lemma}, we also have
    \begin{align*}
  \bigg( \int_{ [-M_1, M_1]^{2J} }& \log \bigg|  \sum_{j \leq J} b_j e^{ u_j + i v_j } \bigg|     d \vecu  d\vecv \bigg)^2  \\
  & \leq (2M_1)^{2J}  \int_{ [-M_1, M_1]^{2J} } \bigg(\log \bigg|  \sum_{j \leq J} b_j e^{ u_j + i v_j } \bigg|  \bigg)^2   d \vecu  d\vecv \\
  & \ll   M_1 ^{2J}  \int_{ \mathbb{R}^{2J} } \bigg(\log \bigg|  \sum_{j \leq J} b_j e^{ u_j + i v_j } \bigg|  \bigg)^2  e^{ - \sum_j ( u_j^2 + v_j^2 )/M_1^2 }   d \vecu  d\vecv\\
  &  \ll M_1^{4J+2}.
 \end{align*}
 Hence, by Proposition \ref{prop:G}
   \begin{align*}
 \mathbb{E}\big[  \log & | F_J ( \sigma_T   :  X ) |  \big] \\
 = &  \sum_{\veck, \vecl, \vecm, \vecn \in  (\mathbb{Z}_{\geq 0 })^J  }  \frac{ q_{ \veck, \vecl: \vecm, \vecn}    }{  ( \theta \log \log T)^{ J+ \mathcal{K}(\veck+\vecl+ \vecm+\vecn)  } }  \\
 & \quad \int_{ [-M_1, M_1]^{2J} } \log \bigg|  \sum_{j \leq J} b_j e^{ u_j + i v_j } \bigg|  \exp\bigg( -  \sum_j \frac{ u_j^2+v_j^2}{ \theta \xi_j \log\log T} \bigg) \vecu^{\vecm} \vecv^{\vecn}   d \vecu  d \vecv \\ 
 & +O \bigg(  \frac{( \log \log \log T)^{J+1/2} }{ ( \log \log T)^{5/2}} \bigg).
 \end{align*}
 Once again, by the Cauchy-Schwarz inequality, Proposition \ref{prop:G} and Lemma \ref{lemma}, we find that
   \begin{equation}\begin{split}\label{eqn:EFJI}
    \mathbb{E}\big[  \log&  | F_J ( \sigma_T   :  X ) |  \big] \\
    = &  \sum_{ \veck, \vecl, \vecm, \vecn \in  (\mathbb{Z}_{\geq 0 })^J }  \frac{ q_{ \veck, \vecl: \vecm, \vecn}    }{  ( \theta \log \log T)^{ J+ \mathcal{K}(\veck+\vecl+ \vecm+\vecn)  } }  \\
 & \quad \int_{ \mathbb{R}^{2J} } \log \bigg|  \sum_{j \leq J} b_j e^{ u_j + i v_j } \bigg|  \exp\bigg( -  \sum_j \frac{ u_j^2+v_j^2}{ \theta \xi_j \log\log T} \bigg)  \vecu^{\vecm} \vecv^{\vecn}   d \vecu  d\vecv \\ 
 & +O \bigg(  \frac{( \log \log \log T)^{J+1/2} }{ ( \log \log T)^{5/2}} \bigg) \\
  = &    \sum_{ \veck, \vecl, \vecm, \vecn \in  (\mathbb{Z}_{\geq 0 })^J  }  \frac{ q_{ \veck, \vecl: \vecm, \vecn}    }{      (  \theta\log \log T)^{ \mathcal{K}(\veck+\vecl)+ \mathcal{K}(\vecm+\vecn)/2 }} I_{  \vecm, \vecn}(\theta, T)   +O \bigg(  \frac{( \log \log \log T)^{J+1/2} }{ ( \log \log T)^{5/2}} \bigg),
 \end{split}\end{equation}
 where
 $$ I_{  \vecm, \vecn}(\theta, T)  : =\int_{ \mathbb{R}^{2J} } \log \bigg|  \sum_{j \leq J} b_j e^{ (u_j + i v_j)\sqrt{\theta \log \log T}  } \bigg|  e^{ -  \sum_j (u_j^2+v_j^2)/   \xi_j  }      \vecu^{\vecm} \vecv^{\vecn}      d \vecu  d\vecv . 
$$
  The logarithm is dominated by the biggest term in the $j$-sum, so that we divide $\mathbb{R}^{2J}$  into $J$ pieces
 \begin{align*}
I_{  \vecm, \vecn}(\theta, T)   = & \sum_{ \ell=1}^J  \int_{\mathbb{R}^{J}}    \int_{\mathcal{R}_{\ell }  } \log \bigg|  \sum_{j \leq J} b_j e^{ (u_j + i v_j)\sqrt{\theta  \log \log T}  } \bigg|  e^{ -  \sum_j (u_j^2+v_j^2)/ \xi_j }      \vecu^{\vecm} \vecv^{\vecn}    d\vecu d\vecv   ,
 \end{align*}
 where $\mathcal{R}_\ell$ is defined in \eqref{def Rell}.  By symmetry, it is enough to consider $\mathcal{R}_1 $. Then
 \begin{align*}
 \int_{\mathbb{R}^{J}}   \int_{\mathcal{R}_{1} }&   \log \bigg|  \sum_{j \leq J} b_j e^{ (u_j + i v_j)\sqrt{ \theta \log \log T}  } \bigg|  e^{ -  \sum_j (u_j^2+v_j^2)/ \xi_j }      \vecu^{\vecm} \vecv^{\vecn}    d\vecu d\vecv \\
 =&  \int_{\mathbb{R}^{J}}   \int_{\mathcal{R}_1 } \log \bigg|    b_1 e^{(u_1  +iv_1 )\sqrt{  \theta  \log \log T }} \bigg|     e^{ -  \sum_j (u_j^2+v_j^2)/\xi_j }      \vecu^{\vecm} \vecv^{\vecn}   d\vecu d\vecv +  \mathcal{E}_{\vecm, \vecn, 1 } (\theta, T)  \\
  =&    d_{\vecn}  \int_{\mathcal{R}_1 } (   \sqrt{   \theta  \log \log T}  u_1    +   \log |b_1| )     e^{ -  \sum_j  u_j^2 /\xi_j }      \vecu^{\vecm}     d\vecu   +  \mathcal{E}_{\vecm, \vecn, 1 } (\theta, T), 
  \end{align*}
 where
 \begin{align*}
 \mathcal{E}_{\vecm, \vecn, 1 } (\theta, T)  := &   \int_{\mathbb{R}^{J}}   \int_{\mathcal{R}_1}  \log \bigg|  1 +  \sum_{j=2}^J \frac{b_j}{b_1}  e^{(u_j -u_1 +i (v_j -v_1)) \sqrt{ \theta \log \log T }  }  \bigg| e^{ -  \sum_j (u_j^2+v_j^2)/ \xi_j }      \vecu^{\vecm} \vecv^{\vecn}    d\vecu d\vecv
 \end{align*}
 and
 \begin{equation}\label{eqn:dntheta}
  d_{\vecn} :=    \int_{\mathbb{R}^{J}}       e^{ -  \sum_j  v_j^2/ \xi_j  }  \vecv^{\vecn}    d\vecv  =   \prod_{j \leq J}   \bigg(   \xi_j^{(n_j+1)/2 }   \int_{\mathbb{R}}  v^{n_j} e^{- v^2 } dv \bigg) .
  \end{equation}
  Note that $ \int_{\mathbb{R}}  v^{n_j} e^{- v^2 } dv  = 0 $ if  $ n_j $ is odd, and $ = \Gamma ( ( n_j +1)/2) $ otherwise. Therefore,
 \begin{equation}\label{eqn:main int asymp}\begin{split}
I_{\vecm, \vecn}( \theta, T)  = &  \sum_{ \ell=1}^J  d_{\vecn}  \int_{\mathcal{R}_\ell } (   \sqrt{  \theta   \log \log T}  u_\ell     +   \log |b_\ell | )     e^{ -  \sum_j  u_j^2 /  \xi_j  }      \vecu^{\vecm}     d\vecu   + \mathcal{E}_{\vecm, \vecn } (\theta, T)     \\
= &   \sqrt{   \theta  \log \log T} d_{\vecn} \sum_{ \ell=1}^J   \int_{\mathcal{R}_\ell }     u_\ell         e^{ -  \sum_j  u_j^2 /  \xi_j  }       \vecu^{\vecm}     d\vecu  \\
& +  d_{\vecn}  \sum_{ \ell=1}^J   \int_{\mathcal{R}_\ell }     \log |b_\ell |       e^{ -  \sum_j  u_j^2 / \xi_j  }       \vecu^{\vecm}    d\vecu   + \mathcal{E}_{\vecm, \vecn } (\theta, T) ,
 \end{split}\end{equation}
where
 $$  \mathcal{E}_{\vecm, \vecn } (\theta, T)    := \sum_{\ell=1}^J  \mathcal{E}_{\vecm, \vecn, \ell } (\theta, T)     .$$
 By estimating $ \mathcal{E}_{\vecm, \vecn } (\theta, T) $ in \S \ref{sec proof prop3}, we prove the following proposition.
 \begin{prop} \label{prop3}
 Let $ J \geq 2 $. Then  we have
 $$    \mathcal{E}_{\vecm, \vecn } (\theta, T) = O\bigg(  \frac{ 1}{  ( \log \log T)^{1/4}}    \bigg)  .$$ 
 \end{prop}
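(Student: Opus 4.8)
The plan is to reduce everything to a single piece $\mathcal{E}_{\vecm,\vecn,1}(\theta,T)$ and to exploit the fact that on $\mathcal{R}_1$ the quantity inside the logarithm stays bounded. Put $L:=\sqrt{\theta\log\log T}$ and
\[ z=z(\vecu,\vecv):=\sum_{j=2}^{J}\frac{b_j}{b_1}\,e^{(u_j-u_1+i(v_j-v_1))L},\]
so that $\mathcal{E}_{\vecm,\vecn,1}(\theta,T)=\int_{\R^J}\int_{\mathcal{R}_1}\log|1+z|\,\vecu^{\vecm}\vecv^{\vecn}\,e^{-\sum_k(u_k^2+v_k^2)/\xi_k}\,d\vecu\,d\vecv$. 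By relabelling the indices the same estimate will bound each $\mathcal{E}_{\vecm,\vecn,\ell}$, and since $\mathcal{E}_{\vecm,\vecn}=\sum_\ell\mathcal{E}_{\vecm,\vecn,\ell}$ is a sum of finitely many terms, it suffices to treat $\mathcal{E}_{\vecm,\vecn,1}$; moreover only the finitely many $\vecm,\vecn$ with $\mathcal{K}(\vecm+\vecn)\le 5$ occur in \eqref{eqn:EFJI}, so all implied constants may depend on $\vecm,\vecn$. The one structural input is that on $\mathcal{R}_1$ one has $u_j\le u_1$ for every $j$, whence $|z|\le C_0:=\sum_{j\ge2}|b_j/b_1|$; in particular $\log|1+z|\le\log(1+C_0)$ there, and the only unboundedness of $\log|1+z|$ on $\mathcal{R}_1$ is an integrable logarithmic singularity along the zero set of $1+z$.

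First I dispose of the portion of $\mathcal{R}_1$ where $|z|\le\tfrac12$. On that set $\tfrac12\le|1+z|\le\tfrac32$, and using $\log(1+t)\le t$ together with $-\log(1-t)\le 2t$ on $[0,\tfrac12]$ we get $\big|\log|1+z|\big|\le 2|z|\le 2\sum_{j\ge2}|b_j/b_1|e^{(u_j-u_1)L}$. For each fixed $j$, substituting $t=u_1-u_j\ge0$ and using $\int_0^\infty e^{-tL}(1+|u_1|+t)^{m_j}\,dt\ll(1+|u_1|)^{m_j}/L$, followed by the absolutely convergent Gaussian integration in the remaining variables, shows that this portion contributes $O(1/L)=O\big((\log\log T)^{-1/2}\big)$ to $\mathcal{E}_{\vecm,\vecn,1}$. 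The same bound controls the positive part of $\log|1+z|$ on all of $\mathcal{R}_1$, since $(\log|1+z|)_+\le|z|$.

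It remains to bound the integral over $\mathcal{R}_1^{\sharp}:=\{|z|>\tfrac12\}\cap\mathcal{R}_1$. If $|z|>\tfrac12$ then $\max_{j\ge2}|b_j/b_1|e^{(u_j-u_1)L}\ge\frac{1}{2(J-1)}$, so $0\le u_1-u_j\le C_1/L$ for some $j$; thus $\mathcal{R}_1^{\sharp}$ lies in a union of $J-1$ slabs $\{0\le u_1-u_j\le C_1/L\}$, each of Gaussian measure $O(1/L)$, so $\mathrm{meas}_g(\mathcal{R}_1^{\sharp})=O(1/L)$, where $g:=e^{-\sum_k(u_k^2+v_k^2)/(2\xi_k)}$. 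Bounding $|\vecu^{\vecm}\vecv^{\vecn}|g\ll 1$ and applying the Cauchy--Schwarz inequality, with all integrals taken over subsets of $\mathcal{R}_1$,
\[\Big|\int_{\mathcal{R}_1^{\sharp}}\log|1+z|\,\vecu^{\vecm}\vecv^{\vecn}\,e^{-\sum_k(u_k^2+v_k^2)/\xi_k}\,d\vecu\,d\vecv\Big|\ \ll\ \big(\mathrm{meas}_g(\mathcal{R}_1^{\sharp})\big)^{1/2}\Big(\int_{\mathcal{R}_1}\big(\log|1+z|\big)^2 g\,d\vecu\,d\vecv\Big)^{1/2}.\]
Granting $\int_{\mathcal{R}_1}(\log|1+z|)^2 g\,d\vecu\,d\vecv\ll 1$, the right-hand side is $O(L^{-1/2})=O\big((\log\log T)^{-1/4}\big)$, which together with the previous paragraph gives the proposition.

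Finally, for the bound $\int_{\mathcal{R}_1}(\log|1+z|)^2 g\ll1$: wherever $|1+z|\ge\varepsilon_0$ one has $(\log|1+z|)^2\le\max\!\big(\log^2(1+C_0),\log^2(1/\varepsilon_0)\big)$, so that part contributes $O(1)$, and it remains to bound $\int_{\{|1+z|<\varepsilon_0\}\cap\mathcal{R}_1}(\log\frac{1}{|1+z|})^2 g$ for a fixed small $\varepsilon_0$. By the layer-cake formula this equals $\int_0^\infty\mathrm{meas}_g\big(\{|1+z|<\min(e^{-\sqrt{\lambda}},\varepsilon_0)\}\cap\mathcal{R}_1\big)\,d\lambda$, so it suffices to show $\mathrm{meas}_g(\{|1+z|<\varepsilon\}\cap\mathcal{R}_1)\ll\varepsilon$ uniformly in $T$, since then the integral is $\ll\int_0^\infty e^{-\sqrt{\lambda}}\,d\lambda\ll1$. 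For this, recall $\{|1+z|<\varepsilon\}\cap\mathcal{R}_1\subseteq\mathcal{R}_1^{\sharp}\subseteq\bigcup_j\{0\le u_1-u_j\le C_1/L\}$; on the $j$-th slab the $j$-th summand of $z$ has modulus $|b_j/b_1|e^{(u_j-u_1)L}$ bounded below by a positive constant $c_j$, so for all other variables fixed, $z$ regarded as a function of $v_j$ traverses, over each period $2\pi/L$, a circle of radius $\ge c_j$ about a fixed centre, and hence spends $v_j$-length $\ll\varepsilon/(c_jL)$ per period inside $\{|1+z|<\varepsilon\}$; integrating the Gaussian in $v_j$ over this set (which has density $\ll\varepsilon/(2\pi c_j)$ per unit length, summed over the $\asymp L$ relevant periods) gives $O(\varepsilon)$, and then integrating the remaining variables gives $\mathrm{meas}_g(\{|1+z|<\varepsilon\}\cap\mathcal{R}_1)\ll\varepsilon$. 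The main obstacle is precisely this last step: the zero set of $1+z$ is $2\pi/L$-periodic in each $v_j-v_1$ and hence becomes denser as $T\to\infty$, and one must verify that the logarithmic singularity it carries still integrates to a bounded quantity against the fixed Gaussian weight --- which is exactly what the circle-of-bounded-radius argument above secures.
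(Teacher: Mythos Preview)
Your argument is correct and yields the same $(\log\log T)^{-1/4}$ bound, but by a genuinely different route from the paper. The paper introduces a cutoff $A_T=\tfrac14\log\log\log T$ and partitions $\mathcal{R}_1$ according to which indices satisfy $u_1-u_j\le A_T/L$; on the piece where all $j$ are far one has $|z|\ll e^{-A_T}$ and hence $\log|1+z|\ll e^{-A_T}$, while on each remaining piece one integrates first in a single \emph{real} variable $u_\ell$, substitutes $w=e^{u_\ell L}$, and invokes the elementary one-dimensional estimate $\int_\epsilon^1|\log|w+z||\,\frac{dw}{w}=O(1/\epsilon)$ (Lemma~\ref{lem:bound log int}) to get $O(e^{A_T}/L)$; the choice of $A_T$ balances the two contributions. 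You instead separate $\{|z|\le\tfrac12\}$ (handled by $|\log|1+z||\le2|z|$ and a direct $u_j$-integration, giving $O(1/L)$) from $\{|z|>\tfrac12\}$, and on the latter apply Cauchy--Schwarz: the thin region $\mathcal{R}_1^\sharp$ has weighted measure $O(1/L)$, while the second moment $\int(\log|1+z|)^2 g$ is $O(1)$ via a layer-cake reduction to the level-set bound $\mathrm{meas}_g(\{|1+z|<\varepsilon\})\ll\varepsilon$, proved by exploiting the $2\pi/L$-periodicity in an \emph{imaginary} variable $v_j$ (the circle-of-radius-$\ge c_j$ argument). In short, the paper dissects the logarithmic singularity along the real direction with an explicit balancing parameter, whereas you dissect it along the imaginary direction via an $L^2$ bound; the paper's route is shorter and rests on a single calculus lemma, while yours avoids the auxiliary parameter $A_T$ and isolates a uniform second-moment estimate that would transfer directly to the difference computation in Lemma~\ref{lemma zero density}.
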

 Therefore, by \eqref{eqn:EFJI}, \eqref{eqn:main int asymp} and Propositions \ref{prop3} and \ref{prop:G} we find that
 \begin{equation}\label{eqn:2}\begin{split}
     \mathbb{E} \big[ & \log  | F_J ( \sigma_T   :  X ) |  \big] \\
  = &     q_{0,0 : 0,0}       d_{0}     \bigg( \sqrt{   \theta \log \log T}  \sum_{ \ell=1}^J   \int_{\mathcal{R}_\ell }     u_\ell         e^{ -  \sum_j  u_j^2 /  \xi_j  }           d\vecu    +   \sum_{ \ell=1}^J   \int_{\mathcal{R}_\ell }     \log |b_\ell |       e^{ -  \sum_j  u_j^2 /  \xi_j  }            d\vecu \bigg)   \\
  & +   \sum_{    \mathcal{K}(  \vecm )=1 }    q_{0,0 : \vecm, 0}       d_{0}         \sum_{ \ell=1}^J   \int_{\mathcal{R}_\ell }     u_\ell         e^{ -  \sum_j  u_j^2 /  \xi_j  }      \vecu^{\vecm}       d\vecu         +  O\bigg(  \frac{ 1}{  ( \log \log T)^{1/4}}    \bigg)  \\
  = &      \pi^{-J/2} \prod_j \xi_j^{-1/2}   \sqrt{  \theta  \log \log T}  \sum_{ \ell=1}^J   \int_{\mathcal{R}_\ell }     u_\ell         e^{ -  \sum_j  u_j^2 /  \xi_j  }           d\vecu  \\
 &   +    \pi^{-J/2} \prod_j \xi_j^{-1/2}    \sum_{ \ell=1}^J     \log |b_\ell |    \int_{\mathcal{R}_\ell }      e^{ -  \sum_j  u_j^2 /  \xi_j }            d\vecu     +  O\bigg(  \frac{ 1}{  ( \log \log T)^{1/4}}    \bigg)  .
 \end{split}\end{equation}
 This proves Theorem \ref{thrm expt}. 
 
 
 Next we prove Theorem \ref{thrm zero density} assuming Conjecture \ref{conj}. By Littlewood's lemma and  \eqref{eqn:EFJI} we see that
\begin{equation}\label{eqn:Littlewood}\begin{split}
\int_{ \sigma_T(\theta_1)}^{\sigma_T(\theta_2)} & N_{F_J} ( w : T)dw 
\\ =& \frac{1}{ 2 \pi} \int_T^{2T} \log |F_J ( \sigma_T(\theta_1) +it)|dt \\
&  - \frac{1}{ 2 \pi} \int_T^{2T} \log |F_J ( \sigma_T(\theta_2) +it)|dt  + O\bigg(  \frac{T}{ ( \log T)^{\theta_2}}\bigg) \\
 =  &  \frac{T}{2\pi} \bigg(  \mathbb{E} \big[  \log | F_J ( \sigma_T  (\theta_1)  :  X ) |  \big]  -  \mathbb{E} \big[  \log | F_J ( \sigma_T (\theta_2)  :  X ) |  \big]  \bigg) + O \bigg(  \frac{T}{ ( \log T)^\eta }\bigg),
\end{split}\end{equation}
where $ \sigma_T( \theta) = 1/2+ (\log T)^{ - \theta }$ and $ 0 < \theta_2 < \theta_1 $.  We need the following lemma.
\begin{lemma}\label{lemma zero density}
Let $\alpha $ be a real number, $ \theta_1>\theta_2>0$ and $H_T = \theta_1 - \theta_2 $. Suppose that $H_T   \to 0 $ as $T \to \infty$. Then  for each $i =1,2$ we have
\begin{equation*} \begin{split}
  \theta_1^\alpha  I_{  \vecm, \vecn}& ( \theta_1 , T)  - \theta_2^\alpha  I_{  \vecm, \vecn}( \theta_2 , T)  \\
= &  H_T   \sqrt{    \log \log T} d_{\vecn}       \bigg(   \alpha+\frac12  \bigg)  \theta_i^{    \alpha  -1/2   }    \sum_{ \ell=1}^J   \int_{\mathcal{R}_\ell }     u_\ell         e^{ -  \sum_j  u_j^2 /    \xi_j  }     \vecu^{\vecm}     d\vecu  \\
& + H_T    d_{\vecn}  \alpha  \theta_i^{  \alpha    -1 }     \sum_{ \ell=1}^J   \int_{\mathcal{R}_\ell }     \log |b_\ell |       e^{ -  \sum_j  u_j^2 /   \xi_j  }       \vecu^{\vecm}     d\vecu \\
&   +  O\bigg(  \frac{H_T  }{ ( \log \log T)^{1/4}} +  H_T  ^2 \sqrt{ \log \log T}  \bigg)  .
 \end{split}\end{equation*}
\end{lemma}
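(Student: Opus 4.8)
The plan is to combine the decomposition \eqref{eqn:main int asymp} and Proposition~\ref{prop3} with a first-order Taylor expansion in $\theta$, so that the whole statement reduces to a single increment estimate for $\mathcal{E}_{\vecm,\vecn}$. Set
\[
A_{\vecm,\vecn}:=d_{\vecn}\sum_{\ell=1}^{J}\int_{\mathcal{R}_\ell}u_\ell\,e^{-\sum_j u_j^2/\xi_j}\vecu^{\vecm}\,d\vecu,\qquad
B_{\vecm,\vecn}:=d_{\vecn}\sum_{\ell=1}^{J}\int_{\mathcal{R}_\ell}\log|b_\ell|\,e^{-\sum_j u_j^2/\xi_j}\vecu^{\vecm}\,d\vecu,
\]
so that \eqref{eqn:main int asymp} reads $I_{\vecm,\vecn}(\theta,T)=\sqrt{\theta\log\log T}\,A_{\vecm,\vecn}+B_{\vecm,\vecn}+\mathcal{E}_{\vecm,\vecn}(\theta,T)$, whence
\[
\theta_1^{\alpha}I_{\vecm,\vecn}(\theta_1,T)-\theta_2^{\alpha}I_{\vecm,\vecn}(\theta_2,T)
=\bigl(\theta_1^{\alpha+1/2}-\theta_2^{\alpha+1/2}\bigr)\sqrt{\log\log T}\,A_{\vecm,\vecn}
+\bigl(\theta_1^{\alpha}-\theta_2^{\alpha}\bigr)B_{\vecm,\vecn}
+\theta_1^{\alpha}\mathcal{E}_{\vecm,\vecn}(\theta_1,T)-\theta_2^{\alpha}\mathcal{E}_{\vecm,\vecn}(\theta_2,T).
\]
By the mean value theorem, $\theta_1^{\beta}-\theta_2^{\beta}=\beta\,\theta_i^{\beta-1}H_T+O(H_T^2)$ for every real $\beta$ (either $i$); taking $\beta=\alpha+\tfrac12$ and $\beta=\alpha$ converts the first two terms into the two main terms claimed in the lemma with remainder $O(H_T^2\sqrt{\log\log T})$. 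In the last pair, write $\theta_1^{\alpha}\mathcal{E}_{\vecm,\vecn}(\theta_1,T)-\theta_2^{\alpha}\mathcal{E}_{\vecm,\vecn}(\theta_2,T)=(\theta_1^{\alpha}-\theta_2^{\alpha})\mathcal{E}_{\vecm,\vecn}(\theta_1,T)+\theta_2^{\alpha}\bigl(\mathcal{E}_{\vecm,\vecn}(\theta_1,T)-\mathcal{E}_{\vecm,\vecn}(\theta_2,T)\bigr)$; the first summand is $O(H_T(\log\log T)^{-1/4})$ by Proposition~\ref{prop3}, so the lemma follows once we show
\[
\mathcal{E}_{\vecm,\vecn}(\theta_1,T)-\mathcal{E}_{\vecm,\vecn}(\theta_2,T)\ll \frac{H_T}{(\log\log T)^{1/4}}.
\]

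For this I would integrate by parts in the integration variables. Put $s=\sqrt{\theta\log\log T}$, $s_i=\sqrt{\theta_i\log\log T}$, and recall $\mathcal{E}_{\vecm,\vecn}=\sum_{\ell}\mathcal{E}_{\vecm,\vecn,\ell}$ with $\mathcal{E}_{\vecm,\vecn,\ell}=\int_{\mathbb{R}^{J}}\int_{\mathcal{R}_\ell}\Phi_\ell\,e^{-\sum_j(u_j^2+v_j^2)/\xi_j}\vecu^{\vecm}\vecv^{\vecn}\,d\vecu\,d\vecv$, where $\Phi_\ell=\log\bigl|1+\sum_{j\neq\ell}\tfrac{b_j}{b_\ell}e^{(u_j-u_\ell+i(v_j-v_\ell))s}\bigr|$. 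Since $\Phi_\ell$ depends on the variables only through the products $(u_j-u_\ell)s$ and $(v_j-v_\ell)s$, Euler's identity gives the pointwise relation $s\,\partial_s\Phi_\ell=\sum_{j\neq\ell}\bigl[(u_j-u_\ell)\partial_{u_j}+(v_j-v_\ell)\partial_{v_j}\bigr]\Phi_\ell$. Replacing $\Phi_\ell$ by the smooth function $\tfrac12\log(|\,\cdot\,|^2+\varepsilon^2)$, differentiating $\mathcal{E}_{\vecm,\vecn,\ell}$ in $s$, inserting this relation, integrating by parts once in each $u_j$ and $v_j$ with $j\neq\ell$ (the boundary terms vanish because $u_j-u_\ell$ vanishes on the relevant face of $\mathcal{R}_\ell$ while the Gaussian kills the contributions at infinity), and letting $\varepsilon\to0$ (dominated convergence, with a dominating function supplied by Lemma~\ref{lemma} with $k=1$), one obtains
\[
\mathcal{E}_{\vecm,\vecn,\ell}(\theta_1,T)-\mathcal{E}_{\vecm,\vecn,\ell}(\theta_2,T)
=-\int_{s_2}^{s_1}\frac1s\sum_{j\neq\ell}\left(\int_{\mathbb{R}^{J}}\int_{\mathcal{R}_\ell}\Phi_\ell\,W_{\ell,j}\,d\vecu\,d\vecv\right)ds,
\]
where each $W_{\ell,j}$ equals $e^{-\sum_j(u_j^2+v_j^2)/\xi_j}$ times a polynomial of degree $\leq\mathcal{K}(\vecm+\vecn)+2$ with coefficients independent of $\theta$ and $T$. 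The decisive feature is the extra factor $1/s$ produced by Euler's identity.

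Finally, each inner double integral is of exactly the shape handled in the proof of Proposition~\ref{prop3} — a power of a logarithm integrated against a logarithmic singularity, split into a far region where $\Phi_\ell$ is exponentially small in $s$ and a thin near-diagonal slab treated by Cauchy--Schwarz — so that argument gives $\int_{\mathbb{R}^{J}}\int_{\mathcal{R}_\ell}|\Phi_\ell|\,|W_{\ell,j}|\ll(\log\log T)^{-1/4}$ uniformly for $s\in[s_2,s_1]$. Hence
\[
\bigl|\mathcal{E}_{\vecm,\vecn,\ell}(\theta_1,T)-\mathcal{E}_{\vecm,\vecn,\ell}(\theta_2,T)\bigr|\ll\frac{1}{(\log\log T)^{1/4}}\int_{s_2}^{s_1}\frac{ds}{s}=\frac{\log(s_1/s_2)}{(\log\log T)^{1/4}},
\]
and since $\log(s_1/s_2)=\tfrac12\log(\theta_1/\theta_2)\ll H_T/\theta_2\ll H_T$, summing over $\ell$ yields the displayed bound; combined with the Taylor step this proves the lemma, all implied constants depending only on $\alpha,J,\vecm,\vecn$, the $b_j$, and a lower bound for $\theta_2$. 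The genuine obstacle is precisely this last estimate: one may neither bound $\mathcal{E}_{\vecm,\vecn}(\theta_1,T)-\mathcal{E}_{\vecm,\vecn}(\theta_2,T)$ by $|\mathcal{E}_{\vecm,\vecn}(\theta_1,T)|+|\mathcal{E}_{\vecm,\vecn}(\theta_2,T)|=O((\log\log T)^{-1/4})$ — that destroys the saving of $H_T$ — nor differentiate under the integral naively, since the zero set of $1+\sum_{j\neq\ell}\tfrac{b_j}{b_\ell}e^{(\cdots)s}$ moves with $s$ and $\partial_s\Phi_\ell$ is not dominated; the $\varepsilon$-regularised integration by parts above is the device that both circumvents the logarithmic poles of $\Phi_\ell$ and manufactures the factor $1/s$ which turns the innocuous $\log(s_1/s_2)$ into the required $O(H_T)$. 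Verifying that the limit $\varepsilon\to0$ may be passed through every integral sign and that each boundary term indeed vanishes is where the care is needed.
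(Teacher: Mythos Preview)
Your overall strategy and the final bound are correct, but you have taken a harder road than the paper does for the key estimate on $\theta_1^{\alpha}\mathcal{E}_{\vecm,\vecn}(\theta_1,T)-\theta_2^{\alpha}\mathcal{E}_{\vecm,\vecn}(\theta_2,T)$.

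The paper does not differentiate $\Phi_\ell$ at all. Instead it rescales the integration variables $u_j\mapsto u_j\sqrt{\theta}$, $v_j\mapsto v_j\sqrt{\theta}$ in the definition of $\mathcal{E}_{\vecm,\vecn,\ell}(\theta,T)$. After this substitution the logarithmic factor becomes $\log\bigl|1+\sum_{j\neq\ell}(b_j/b_\ell)e^{(u_j-u_\ell+i(v_j-v_\ell))\sqrt{\log\log T}}\bigr|$, which is \emph{independent of $\theta$}, while the Gaussian weight becomes $e^{-\sum_j(u_j^2+v_j^2)/(\theta\xi_j)}$ and the Jacobian together with $\vecu^{\vecm}\vecv^{\vecn}$ contributes a power $\theta^{-J-\mathcal{K}(\vecm+\vecn)/2}$. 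Thus
\[
\theta_1^{\alpha}\mathcal{E}_{\vecm,\vecn,\ell}(\theta_1,T)-\theta_2^{\alpha}\mathcal{E}_{\vecm,\vecn,\ell}(\theta_2,T)
=\int_{\mathbb{R}^{J}}\int_{\mathcal{R}_\ell}\Phi_\ell^{\ast}\,
\Bigl(\theta_1^{\alpha'}e^{-\sum_j(u_j^2+v_j^2)/(\theta_1\xi_j)}-\theta_2^{\alpha'}e^{-\sum_j(u_j^2+v_j^2)/(\theta_2\xi_j)}\Bigr)\vecu^{\vecm}\vecv^{\vecn}\,d\vecu\,d\vecv,
\]
with $\alpha'=\alpha-J-\mathcal{K}(\vecm+\vecn)/2$ and $\Phi_\ell^{\ast}$ the $\theta$-free logarithm. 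Now the bracket is smooth in $\theta$, so writing it as $\int_{\theta_2}^{\theta_1}\partial_w\bigl(w^{\alpha'}e^{-\sum_j(u_j^2+v_j^2)/(w\xi_j)}\bigr)dw$ gives immediately the pointwise bound $\ll H_T\bigl(1+\sum_j(u_j^2+v_j^2)\bigr)e^{-\sum_j(u_j^2+v_j^2)/(\theta_2\xi_j)}$. Feeding this back in, one is left with an integral of $|\Phi_\ell^{\ast}|$ against a polynomial-times-Gaussian weight, which is exactly the situation of Proposition~\ref{prop3} and yields the factor $(\log\log T)^{-1/4}$.

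Your route---Euler's identity $s\,\partial_s\Phi_\ell=\sum_{j\neq\ell}[(u_j-u_\ell)\partial_{u_j}+(v_j-v_\ell)\partial_{v_j}]\Phi_\ell$, integration by parts, and $\varepsilon$-regularisation---arrives at the same endpoint (an integral of $|\Phi_\ell|$ against a modified polynomial-Gaussian weight) and is valid, but it forces you to confront the singularities of $\Phi_\ell$ head-on and to justify several limit interchanges. The paper's change of variables sidesteps all of this: it puts the $\theta$-dependence entirely into the smooth Gaussian factor, where differentiation is trivial and no regularisation is needed. What your argument buys is perhaps a slightly more conceptual explanation of why the $1/s$ factor appears; what the paper's argument buys is a two-line proof with no analytic subtleties.
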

We prove it in \S \ref{sec proof zero density lemma}. Suppose that $H_T \log \log T = o(1)$, then 
 \begin{align*}
 \int_{\sigma_T(\theta)}^{ \sigma_T( \theta-H_T  )} N_{F_J} ( w : T) dw&  \leq \big( \sigma_T ( \theta-H_T  ) - \sigma_T(\theta) \big) N_{F_J} ( \sigma_T( \theta):T) \\
 & = \frac{ H_T   \log \log T}{ ( \log T)^\theta}  \big( 1+ O(H_T   \log \log T)\big) N_{F_J} ( \sigma_T( \theta):T) 
 \end{align*}
 and
 \begin{align*}
  \int_{\sigma_T(\theta+H_T   )}^{ \sigma_T( \theta)} N_{F_J} ( w : T) dw&  \geq \big( \sigma_T ( \theta) - \sigma_T(\theta+H_T   ) \big) N_{F_J} ( \sigma_T( \theta):T) \\
  & = \frac{ H_T   \log \log T}{ ( \log T)^\theta} \big( 1+ O( H_T   \log \log T)\big) N_{F_J} ( \sigma_T( \theta):T).
 \end{align*}
 By \eqref{eqn:Littlewood}, \eqref{eqn:EFJI} and Lemma \ref{lemma zero density}, we find that
 \begin{align*}
  \frac{ H_T   \log \log T}{ ( \log T)^\theta} & ( 1+ O(H_T   \log \log T)) N_{F_J} ( \sigma_T( \theta):T) \\
=&\frac{  H_T    T }{2 \pi}   \sum_{  \veck, \vecl,\vecm, \vecn \in ( \mathbb{Z}_{\geq 0 } )^J }  \frac{ q_{ \veck, \vecl: \vecm, \vecn}    }{     ( \theta \log \log T)^{ \mathcal{K}(\veck+\vecl)+ \mathcal{K}(\vecm+\vecn)/2 }} \\
& \bigg(  \sqrt{    \log \log T} d_{\vecn}    \bigg( -\mathcal{K}( \veck+\vecl)  + \frac{ 1 - \mathcal{K}(\vecm+ \vecn) }{2}  \bigg)  \theta^{ -1/2   }    \sum_{ \ell=1}^J   \int_{\mathcal{R}_\ell }     u_\ell         e^{ -  \sum_j  u_j^2 /    \xi_j  }       \vecu^{\vecm}     d\vecu  \\
& +   d_{\vecn}   \bigg( -\mathcal{K}( \veck+\vecl)- \frac{ \mathcal{K}(\vecm+ \vecn) }{2}   \bigg)  \theta^{     -1 }     \sum_{ \ell=1}^J   \int_{\mathcal{R}_\ell }     \log |b_\ell |       e^{ -  \sum_j  u_j^2 /   \xi_j  }       \vecu^{\vecm}     d\vecu \bigg)  \\
&  +O \bigg(  \frac{T ( \log \log \log T)^{J+1/2} }{ ( \log \log T)^{5/2}}+   \frac{ H_T T}{ ( \log \log T)^{1/4}} + H_T^2 T \sqrt{ \log \log T}   \bigg) .
\end{align*} 
Choose $H_T = ( \log \log T)^{-2}$ to optimize it, we see that 
 \begin{align*}
     N_{F_J} ( \sigma_T( \theta):T) 
=&\frac{       T( \log T)^\theta }{2 \pi}     \sum_{  \veck, \vecl,\vecm, \vecn \in ( \mathbb{Z}_{\geq 0 } )^J  }  \frac{ q_{ \veck, \vecl: \vecm, \vecn}  d_{\vecn}  }{     (\theta \log \log T)^{1+\mathcal{K}(\veck+\vecl)+ \mathcal{K}(\vecm+\vecn)/2 }} \\
& \bigg(  \sqrt{ \theta    \log \log T}     \bigg( -\mathcal{K}( \veck+\vecl)  + \frac{ 1 - \mathcal{K}(\vecm+ \vecn) }{2}  \bigg)      \sum_{ \ell=1}^J   \int_{\mathcal{R}_\ell }     u_\ell         e^{ -  \sum_j  u_j^2 /    \xi_j  }       \vecu^{\vecm}     d\vecu  \\
& +     \bigg( -\mathcal{K}( \veck+\vecl)- \frac{ \mathcal{K}(\vecm+ \vecn) }{2}   \bigg)     \sum_{ \ell=1}^J   \int_{\mathcal{R}_\ell }     \log |b_\ell |       e^{ -  \sum_j  u_j^2 /   \xi_j  }       \vecu^{\vecm}     d\vecu \bigg)  \\
&  +O \bigg( \frac{ T( \log T)^\theta}{   ( \log \log T)^{5/4} }  \bigg) .
\end{align*} 
We see that the summands are smaller than the $O$-term unless $\veck = \vecl = 0 $ and $\mathcal{K}( \vecm+\vecn)= 0, 1 $. Moreover, $ q_{0,0: \vecm, \vecn} = 0 $ if $\mathcal{K}( \vecm+\vecn)=  1 $. Hence, 
 \begin{align*}
   N_{F_J} ( \sigma_T( \theta):T)  
=&  \frac{T ( \log T)^\theta}{  \sqrt{\theta  \log \log T} }       \frac{    q_{0,0:0,0} d_0      }{4 \pi}       \sum_{ \ell=1}^J   \int_{\mathcal{R}_\ell }     u_\ell         e^{ -  \sum_j  u_j^2 /    \xi_j  }          d\vecu    +O \bigg( \frac{ T( \log T)^\theta}{   ( \log \log T)^{5/4} }  \bigg) .
\end{align*} 
Since
 $$q_{0,0:0,0} d_0  = \pi^{-J/2} \prod_j  \xi_j ^{-1/2}, $$
we prove the theorem.
 
 \section{Proof of propositions and lemmas}\label{proof props}
 
 \subsection{Proof of Proposition \ref{prop:hat Psi}}
 Let $ z_j = \pi ( x_j + i y_j ) $ for $ j = 1, \ldots, J$, then
\begin{align*}
\hat{\Psi}_{\theta, T} ( \vecx , \vecy) & = \mathbb{E} \bigg[ \exp \bigg( 2 \pi i  \sum_{j \leq J } (  x_j \Re \log L_j ( \sigma_T : X ) + y_j \Im \log L_j ( \sigma_T : X )    )  \bigg)\bigg] \\
 & = \mathbb{E} \bigg[ \exp \bigg( 2 \pi i  \sum_{j \leq J }  \Re \big[(x_j - i y_j )   \log L_j ( \sigma_T : X ) \big]    \bigg)\bigg] \\
  & = \mathbb{E} \bigg[ \exp \bigg(   i  \sum_{j \leq J } \bar{z}_j   \log L_j ( \sigma_T : X ) + z_j \log L_j ( \sigma_T : \bar{X} )    \bigg)\bigg] .
\end{align*}
 Write 
 $$ \log L_j ( \sigma : X ) := \sum_p g_j ( p, \sigma:X), \qquad g_j ( p, \sigma:X) : =  \sum_{n=1}^\infty   \frac{ a_j ( p^n ) X(p)^n }{ p^{n \sigma}}, $$
 then
 \begin{align*}
\hat{\Psi}_{\theta, T} ( \vecx , \vecy)    & =\prod_p  \mathbb{E} \bigg[ \exp \bigg(   i  \sum_{j \leq J } \bar{z}_j   g_j (  p, \sigma_T : X ) + z_j g_j ( p, \sigma_T : \bar{X} )    \bigg)\bigg] \\
  & =\prod_p  \mathbb{E} \bigg[ \prod_{j \leq J }  \exp \bigg(   i  \big(  \bar{z}_j   g_j (  p, \sigma_T : X ) + z_j g_j ( p, \sigma_T : \bar{X} )   \big)  \bigg)\bigg] 
  \end{align*}
and we see that
\begin{equation}\label{eqn:exp bound 1}
 \bigg|  \mathbb{E} \bigg[ \prod_{j \leq J }  \exp \bigg(   i  \big(  \bar{z}_j   g_j (  p, \sigma_T : X ) + z_j g_j ( p, \sigma_T : \bar{X} )   \big)  \bigg)\bigg] \bigg| \leq 1 .
 \end{equation}
By Lemma 2.5 in \cite{Le2} and the argument to justify the equation (3.28) in \cite[p. 1828 --1829]{Le2}, there is a constant $C_1 >0$ such that
$$ \bigg|  \mathbb{E} \bigg[ \prod_{j \leq J }  \exp \bigg(   i  \big(  \bar{z}_j   g_j (  p, \sigma_T  : X ) + z_j g_j ( p, \sigma_T : \bar{X} )   \big)  \bigg)\bigg]  \bigg| \leq   C_1  \frac{p^{\sigma_T /2} }{     \big(  \sum_j x_j^2 + y_j^2 \big)^{1/4} } $$
for $ p^{-\sigma_T }  \sqrt{  \sum_j x_j^2 + y_j^2 } \geq 1 $. Hence if $ p^{\sigma_T}  \leq C_2  \sqrt{  \sum_j x_j^2 + y_j^2 } $ with $ C_2  = \min\{ 1, C_1 ^{-2} e^{-1} \}$, then
\begin{equation}\label{eqn:exp bound 2}
\bigg|  \mathbb{E} \bigg[ \prod_{j \leq J }  \exp \bigg(   i  \big(  \bar{z}_j   g_j (  p, \sigma_T  : X ) + z_j g_j ( p, \sigma_T : \bar{X} )   \big)  \bigg)\bigg]  \bigg| \leq  e^{-1/2}. 
\end{equation}
Thus, by \eqref{eqn:exp bound 1}, \eqref{eqn:exp bound 2} and the prime number theorem, we have
$$   | \hat{\Psi}_{\theta,T} ( \vecx, \vecy) |    \leq   \prod_{   p^{\sigma_T}  \leq C_2  \sqrt{  \sum_j x_j^2 + y_j^2 }} e^{-1/2}    \leq \exp \bigg(  - C_3   \frac{ ( \sum_j x_j^2 + y_j^2)^{1/2\sigma_T} }{ \log (\sum_j x_j^2 + y_j^2)    }  \bigg)  $$
for $ \sum_j x_j^2 + y_j^2 \geq C_4  $ and for some $ C_3, C_4  >0$. This proves the first inequality in Proposition \ref{prop:hat Psi}.

Let
\begin{equation} \label{def:Aklpsigma}
 A_{\veck, \vecl} (p,\sigma) :=   \mathbb{E} \bigg[       \prod_{j\leq J}    g_j (  p, \sigma : X )^{k_j}  g_j ( p, \sigma: \bar{X} )^{\ell_j}   \bigg]  
 \end{equation}
for $ \veck = ( k_1 , \ldots, k_J)$ and $ \vecl = ( \ell_1, \ldots, \ell_J)$, then each factor of $ \hat{\Psi}_{\theta, T} ( \vecx , \vecy)  $ is 
  \begin{align*}
  &     \sum_{\veck, \vecl \geq 0} \frac{i^{\mathcal{K}(\veck + \vecl)  } }{ \veck ! \vecl ! }    \mathbb{E} \bigg[       \prod_{j\leq J}    g_j (  p, \sigma_T : X )^{k_j}  g_j ( p, \sigma_T : \bar{X} )^{\ell_j}    \big)  \bigg]   \bar{\vecz}^{\veck} \vecz^{\vecl  }   \\
& =     \sum_{\veck, \vecl \geq 0} \frac{i^{\mathcal{K}(\veck + \vecl)  } }{ \veck ! \vecl ! }   A_{\veck, \vecl} (p,\sigma_T )   \bar{\vecz}^{\veck} \vecz^{\vecl  }  ,
\end{align*}
where the sums are over all $ \veck = ( k_1 , \ldots, k_J), \vecl = ( \ell_1, \ldots, \ell_J) \in (\mathbb{Z}_{\geq 0 })^J $ and  $\mathcal{K}(\veck)= k_1 + \cdots + k_J $, $ \veck ! = k_1 ! \cdots k_J ! $ and $  \vecz^{\veck  } = \prod_{j\leq J}  z_j^{k_j}  $. 
Since $ A_{0,0} ( p,\sigma) = 1 $ and  $ A_{0, \veck } ( p, \sigma) = A_{  \veck, 0  } ( p, \sigma) = 0 $ for $ \veck \neq 0 $, the above sum equals
$$  1 +  \sumstar_{\veck, \vecl} \frac{i^{\mathcal{K}(\veck + \vecl)  } }{ \veck ! \vecl ! }   A_{\veck, \vecl} (p,\sigma_T )    \bar{\vecz}^{\veck} \vecz^{\vecl  } =: 1+   \mathcal{A}_{\theta, T, \vecx, \vecy} (p), $$  
where  the $*$-sum is over all nonzero $ \veck , \vecl \in (\mathbb{Z}_{\geq 0 })^J $.  
By estimating \eqref{def:Aklpsigma} one can show that
$$  | A_{\veck, \vecl} ( p , \sigma_T) | \leq  C_5 p^{ - \sigma_T  \mathcal{K}( \veck + \vecl)} $$
for some $C_5>0$ and all nonzero $ \veck , \vecl \in (\mathbb{Z}_{\geq 0 })^J   $. Thus,
\begin{equation}\label{eqn:Att}
  |  \mathcal{A}_{\theta, T, \vecx, \vecy} (p)|    \leq C_5   \sumstar_{\veck, \vecl} \frac{1 }{ \veck ! \vecl ! }   \prod_{j\leq J} \bigg(    \frac{   | z_j |}{p^{ \sigma_T}}    \bigg)^{k_j+ \ell_j }  = C_5 \bigg( \exp\bigg(  \frac{ \sum_{j=1}^J |z_j|  }{ p^{ \sigma_T} } \bigg)   -1   \bigg)^2   . 
  \end{equation}
Let $Y = e^{ ( \log T)^{ \theta /2 }} $, then there exists a constant  $ C_6 > 0 $ such that 
\begin{align*}
 |  \mathcal{A}_{\theta, T, \vecx, \vecy} (p)|  \leq C_6   \frac{\sum_{j=1}^J |z_j|^2 }{p^{2 \sigma_T}} \leq   C_6
\end{align*}
for $ \sum_{ j \leq J } |z_j|^2 \leq  Y $ and $ p \geq Y$. Thus, by \eqref{eqn:exp bound 1}
\begin{align*}
| \hat{\Psi}_{\theta,T} ( \vecx, \vecy) |  &   \leq   \bigg|  \prod_{   p \geq Y } ( 1+   \mathcal{A}_{\theta, T, \vecx, \vecy} (p)  ) \bigg|  \\
& =\bigg| \prod_{p \geq Y} \exp \bigg(    \mathcal{A}_{\theta, T, \vecx, \vecy} (p)  + O \bigg(  \frac{\big( \sum_{j=1}^J |z_j|^2  \big)^2 }{p^{4 \sigma_T}}   \bigg)  \bigg)\bigg| \\
& =\bigg| \exp \bigg(   \sum_{ p \geq Y}   \mathcal{A}_{\theta, T, \vecx, \vecy} (p)  + O \bigg(    \sum_{j=1}^J |z_j|^2       \bigg) \bigg)\bigg| . 
\end{align*}
The $p$-sum is
\begin{align*}
 \sum_{ p \geq Y}   \mathcal{A}_{\theta, T, \vecx, \vecy} (p)   & =   \sum_{p \geq Y}  \sumstar_{\veck, \vecl} \frac{i^{\mathcal{K}(\veck + \vecl)  } }{ \veck ! \vecl ! }   A_{\veck, \vecl} (p,\sigma_T )   \bar{\vecz}^{\veck} \vecz^{\vecl  } \\
  & = -  \sum_{p \geq Y}   \sum_{j_1, j_2 \leq J }   \mathbb{E} \bigg[          g_{j_1}  (  p, \sigma_T : X )   g_{j_2} ( p, \sigma_T : \bar{X} )    \big)  \bigg] \bar{z}_{j_1} z_{j_2} + O  \bigg(  \sum_{j=1}^J |z_j|^2   \bigg)\\
 & = -    \sum_{j_1, j_2 \leq J } \sum_{p \geq Y}    \frac{ a_{j_1} ( p) a_{j_2} (p) }{ p^{2 \sigma_T} } \bar{z}_{j_1} z_{j_2}     + O  \bigg(  \sum_{j=1}^J |z_j|^2   \bigg)\\
 &= - \sum_{j \leq J } |z_j|^2 \bigg( \sum_{p \geq Y}    \frac{ a_{j } ( p)^2  }{ p^{2 \sigma_T} }      + O(1) \bigg) \\
 &\leq  -   \bigg(  \frac{ \pi^2 \theta}{2}  \log \log T  + O(1) \bigg)  \sum_{j \leq J }  (x_j^2 + y_j^2 )  .
\end{align*}
Therefore,
\begin{align*}
| \hat{\Psi}_{\theta,T} ( \vecx, \vecy) |  & \leq  \exp \bigg(    -   \bigg(  \frac{ \pi^2 \theta}{2}  \log \log T  + O(1) \bigg)  \sum_{j \leq J }  (x_j^2 + y_j^2 ) \bigg)  
\end{align*}
holds for  $ \sum_{ j \leq J } (x_j^2 + y_j^2 )  \leq   e^{ ( \log T)^{ \theta /2 }}  $, which proves the second inequality in Proposition  \ref{prop:hat Psi}.

Next, we find an asymptotic of $\hat{\Psi}_{\theta,T} ( \vecx, \vecy)$ for $ \sum_{ j \leq J } (x_j^2 + y_j^2 ) \leq C_7 $. By \eqref{eqn:Att} and choosing $C_7>0 $ sufficiently small, we have that
$ |    \mathcal{A}_{\theta, T, \vecx, \vecy} (p) | \leq 1/2 $ for every prime $p$. Thus,
\begin{align*}
 \hat{\Psi}_{\theta,T} ( \vecx, \vecy) &  = \prod_{   p  } ( 1+   \mathcal{A}_{\theta, T, \vecx, \vecy} (p)  )   \\
& =\prod_p   \exp \bigg(     \mathcal{A}_{\theta, T, \vecx, \vecy} (p)  - \frac12  ( \mathcal{A}_{\theta, T, \vecx, \vecy} (p) )^2    + O \bigg(  \frac{ \sum_{j=1}^J (x_j^2 + y_j^2 )^3 }{p^{6 \sigma_T}}   \bigg)  \bigg)  \\
& =  \exp \bigg(     \sum_p  \mathcal{A}_{\theta, T, \vecx, \vecy} (p)  - \frac12  \sum_p  ( \mathcal{A}_{\theta, T, \vecx, \vecy} (p) )^2    + O \bigg(   \sum_{j=1}^J (x_j^2 + y_j^2 )^3   \bigg)  \bigg) .
\end{align*}
The sum 
$$    \sum_p  \mathcal{A}_{\theta, T, \vecx, \vecy} (p)  - \frac12  \sum_p  ( \mathcal{A}_{\theta, T, \vecx, \vecy} (p) )^2   $$
has a power series representation in $z_1, \bar{z}_1 , \ldots, z_J, \bar{z}_J$, so let it be
$$  \sumstar_{\veck, \vecl}     B_{\veck, \vecl} (\sigma_T )    \bar{\vecz}^{\veck} \vecz^{\vecl  } .$$
For  $\mathcal{K}( \veck+\vecl) \geq 3 $, we have
$$ B_{\veck, \vecl} (\sigma_T )  = B_{\veck, \vecl} (1/2 ) + O\bigg(   \frac{1}{ ( \log T)^\theta} \bigg).$$
For $\mathcal{K}( \veck)=\mathcal{K}(\vecl)=1$, we have
\begin{align*}
 \sum_{\mathcal{K}( \veck)=\mathcal{K}(\vecl)=1}  B_{\veck, \vecl} (\sigma_T )    =  &    \sum_p   \sum_{\mathcal{K}( \veck)=\mathcal{K}(\vecl)=1}  \frac{i^{\mathcal{K}(\veck + \vecl)  } }{ \veck ! \vecl ! }   A_{\veck, \vecl} (p,\sigma_T )   \bar{\vecz}^{\veck} \vecz^{\vecl  } \\
   = &   -  \sum_p  \sum_{j_1, j_2 \leq J }   \mathbb{E} \bigg[          g_{j_1}  (  p, \sigma_T : X )   g_{j_2} ( p, \sigma_T : \bar{X} )       \bigg] \bar{z}_{j_1} z_{j_2}  .
   \end{align*}
By \eqref{eqn:soc},  we find that
$$  \mathbb{E} \bigg[          g_{j_1}  (  p, \sigma_T : X )   g_{j_2} ( p, \sigma_T : \bar{X} )    \big)  \bigg] 
  = C_{j_1, j_2} +  O\bigg(   \frac{1}{ ( \log T)^\theta} \bigg) $$
   for $ j_1 \neq j_2 $ and  
   \begin{align*}
   \mathbb{E} \bigg[          g_j  (  p, \sigma_T : X )   g_j ( p, \sigma_T : \bar{X} )    \big)  \bigg] &  = \sum_p    \frac{ a_{j } ( p)^2  }{ p^{2 \sigma_T} }  +  C'_j +  O\bigg(   \frac{1}{ ( \log T)^\theta} \bigg) \\
   & = \xi_j \theta \log \log T + C_{j,j} +O\bigg(   \frac{1}{ ( \log T)^\theta} \bigg) 
   \end{align*}
   for some constants $ C_{j_1, j_2}$, $C_{j, j} $, $C'_j$ independent to $\theta$. Thus, 
$$  \sum_{\mathcal{K}( \veck)=\mathcal{K}(\vecl)=1}  B_{\veck, \vecl} (\sigma_T )   = - \theta \log \log T  \sum_{j \leq J} \xi_j |z_j|^2 + \sum_{j_1, j_2 \leq J} C_{j_1, j_2} \bar{z}_{j_1} z_{j_2}    +  O\bigg(   \frac{1}{ ( \log T)^\theta} \bigg)  .$$
   Therefore, we have
   \begin{align*}
 \hat{\Psi}_{\theta,T} ( \vecx, \vecy)  = &   e^{   - \theta \log \log T  \sum_{j \leq J} \xi_j |z_j|^2} \exp\bigg(   \sum_{j_1, j_2 \leq J} C_{j_1, j_2} \bar{z}_{j_1} z_{j_2} +  \sum_{\mathcal{K}( \veck+ \vecl)=3,4,5 }     B_{\veck, \vecl} (1/2)   \bar{\vecz}^{\veck} \vecz^{\vecl  }   \bigg) \\
&  \times \exp\bigg(   O\bigg(      \sum_{j=1}^J (x_j^2 + y_j^2 )^3   + \frac{1}{ ( \log T)^\theta} \bigg)  \bigg) \\
= &   e^{   - \pi^2 \theta \log \log T  \sum_{j \leq J} \xi_j (x_j^2 + y_j^2 )}    \bigg( P( \vecx, \vecy)  +  O\bigg(       \sum_{j=1}^J (x_j^2 + y_j^2  )^3  + \frac{1}{ ( \log T)^\theta}   \bigg)  \bigg)   
\end{align*}
for  $ \sum_{ j \leq J } (x_j^2 + y_j^2 ) \leq C_7 $, where $ P(\vecx, \vecy)$ is a polynomial of degree $\leq 5 $ and may be written as 
$$ 1 +    \sum_{\mathcal{K}( \veck+ \vecl)= 2, 3,4,5 }     \tilde{B}_{\veck, \vecl}   \vecx^{\veck} \vecy^{\vecl } . $$
    This completes the proof of Proposition \ref{prop:hat Psi}.

  \subsection{Proof of Proposition \ref{prop:G}}
     By Proposition \ref{prop:hat Psi}, we have
\begin{align*}
 G_{\theta, T}  ( \vecu, \vecv) 
 = & \int_{ \mathbb{R}^{2J}} \hat{\Psi}_{ \theta, T} ( \vecx, \vecy) e^{- 2 \pi i ( \vecx \cdot \vecu + \vecy \cdot \vecv )} d \vecx d\vecy \\
  = &   \int_{\sum_j ( x_j^2 + y_j ^2 ) \leq C_7 } e^{ - \pi^2 \theta \log \log T \sum_{j \leq J} \xi_j  ( x_j ^2 + y_j^2 ) - 2 \pi i ( \vecx \cdot \vecu + \vecy \cdot \vecv )  }P ( \vecx  , \vecy)    d \vecx d \vecy  \\
 & +  O\bigg(   \frac{1}{ ( \log \log T)^{J+3}}  \bigg) \\
   = &    \int_{\mathbb{R}^{2J} } e^{ - \pi^2 \theta \log \log T \sum_{j \leq J} \xi_j  ( x_j ^2 + y_j^2 ) - 2 \pi i ( \vecx \cdot \vecu + \vecy \cdot \vecv )  }P ( \vecx  , \vecy)   d \vecx d \vecy \\
   &  +  O\bigg(   \frac{1}{ ( \log \log T)^{J+3}}  \bigg) ,
 \end{align*} 
 where $P ( \vecx  , \vecy) $ is the polynomial defined in Proposition \ref{prop:hat Psi}. By the change of variables
 $$ x_j   =  \frac{ \tilde{x}_j}{ \pi \sqrt{ \theta \xi_j \log \log T}} - \frac{ i u_j }{ \pi \theta \xi_j \log \log T} $$
 and
  $$ y_j   =  \frac{ \tilde{y}_j}{ \pi \sqrt{ \theta \xi_j \log \log T}} - \frac{ i v_j }{ \pi \theta \xi_j \log \log T},$$
one finds that
$$ P( \vecx, \vecy) =  \sum_{  \veck,\vecl,\vecm, \vecn \in (\mathbb{Z}_{\geq 0 })^J  } \frac{ p_{\veck, \vecl: \vecm,\vecn} }{  ( \theta \log \log T)^{ \mathcal{K}(\veck+\vecl)/2 + \mathcal{K} ( \vecm+\vecn)  } }   \tilde{\vecx}^{\veck} \tilde{\vecy}^{\vecl}  \vecu^{\vecm} \vecv^{\vecn}    ,$$
where  
 $$ p_{ \veck, \vecl: \vecm, \vecn}  =\frac{\tilde{B}_{ \veck+\vecm, \vecl+\vecn} }{ \pi^{ \mathcal{K}( \veck+\vecl+\vecm+\vecn)} i^{\mathcal{K}(\vecm+\vecn)}} \frac{ ( \veck+\vecm )!     (\vecl+\vecn)!}{ \veck! \vecm! \vecl! \vecn! }   \prod_j \xi^{ - ( k_j + \ell_j ) /2 - m_j  - n_j } .    $$
 Then we see that  
 \begin{align*}
 G_{\theta, T} ( \vecu, \vecv) =& \exp\bigg( -  \sum_j \frac{ u_j^2+v_j^2}{ \theta \xi_j \log\log T} \bigg) \frac{1}{ \prod_j ( \pi^2 \theta \xi_j \log \log T)}  \\
 & \int_{ \mathbb{R}^{2J}} e^{ - \sum_j ( \tilde{x}_j^2 + \tilde{y}_j^2 ) }   \sum_{ \veck,\vecl,\vecm, \vecn \in (\mathbb{Z}_{\geq 0 })^J } \frac{ p_{\veck, \vecl: \vecm,\vecn} }{  ( \theta \log \log T)^{ \mathcal{K}(\veck+\vecl)/2 + \mathcal{K} ( \vecm+\vecn)  } }  \tilde{\vecx}^{\veck} \tilde{\vecy}^{\vecl}  \vecu^{\vecm} \vecv^{\vecn} d \tilde{\vecx} d \tilde{\vecy} \\
 &  +  O\bigg(   \frac{1}{ ( \log \log T)^{J+3}}  \bigg) \\
 =& \exp\bigg( -  \sum_j \frac{ u_j^2+v_j^2}{ \theta \xi_j \log\log T} \bigg)   \sum_{  \veck,\vecl,\vecm, \vecn \in (\mathbb{Z}_{\geq 0 })^J }  \frac{ c_{\veck, \vecl} p_{\veck, \vecl: \vecm,\vecn}    }{  ( \theta \log \log T)^{ J+ \mathcal{K}(\veck+\vecl)/2 + \mathcal{K} ( \vecm+\vecn)  } } \vecu^{\vecm} \vecv^{\vecn}    \\
 &  +  O\bigg(   \frac{1}{ ( \log \log T)^{J+3}}  \bigg) ,
 \end{align*}
where
\begin{align*}
   c_{\veck, \vecl}:= &  \pi^{-2J} \bigg( \prod_j  \xi_j^{-1}  \bigg)  \int_{ \mathbb{R}^{2J}} e^{ - \sum_j ( \tilde{x}_j^2 + \tilde{y}_j^2 ) }    \tilde{\vecx}^{\veck} \tilde{\vecy}^{\vecl}  d \tilde{\vecx} d \tilde{\vecy}  \\
   = &  \pi^{-2J} \prod_j  \bigg( \xi_j^{-1} \int_{ \mathbb{R}} e^{ - x^2  } x^{k_j} dx  \int_{ \mathbb{R}} e^{ - y^2  } y^{\ell_j} dy   \bigg) .
\end{align*}
Thus, if there is odd $k_j$ or odd $\ell_j$, then $ c_{\veck, \vecl} = 0 $. Otherwise,
$$ c_{\veck, \vecl}= \pi^{-2J} \prod_j  \bigg( \xi_j^{-1} \Gamma \bigg( \frac{k_j +1}{2} \bigg)\Gamma \bigg( \frac{\ell_j +1}{2} \bigg)   \bigg)  .$$
 Hence, we have
  \begin{align*}
 G_{\theta, T} ( \vecu, \vecv) =&   \exp\bigg( -  \sum_j \frac{ u_j^2+v_j^2}{ \theta \xi_j \log\log T} \bigg)   \sum_{  \veck,\vecl,\vecm, \vecn \in (\mathbb{Z}_{\geq 0 })^J }  \frac{ c_{2\veck, 2\vecl} p_{2\veck, 2\vecl: \vecm,\vecn}    }{  ( \theta \log \log T)^{ J+ \mathcal{K}(\veck+\vecl+ \vecm+\vecn)  } }  \vecu^{\vecm} \vecv^{\vecn}    \\
 &  +  O\bigg(   \frac{1}{ ( \log \log T)^{J+3}}  \bigg) .
 \end{align*}
Letting $ q_{\veck, \vecl:\vecm,\vecn} = c_{2\veck, 2\vecl} p_{2\veck, 2\vecl: \vecm,\vecn}   $, we prove the first identity of the proposition. The second one can be deduced by modifying the proof of Theorem 6 in \cite{BJ}.

\subsection{Proof of Lemma \ref{lemma}}
Our proof is basically the same as the proof of Lemma 3.3 in \cite{GL}, but we need the dependency on $M$. We first see that 
\begin{align*}
\int_{ \mathbb{R}^{2J}} & \bigg| \log \bigg| \sum_{j \leq J} b_j e^{u_j + iv_j } \bigg|  \bigg|^{2k} e^{ -    \sum_j (u_j^2 + v_j^2 )/M} d \vecu d \vecv  \\
=& \int_{ \mathbb{R}^{J}} \int_{ [0,2 \pi]^J}  \sum_{\veck \in \mathbb{Z}^J}  \bigg| \log \bigg| \sum_{j \leq J} b_j e^{u_j + i(v_j + 2 \pi k_j ) } \bigg|  \bigg|^{2k} e^{ -    \sum_j (u_j^2 + (v_j + 2 \pi k_j )^2 )/M} d \vecv d \vecu  \\
=& \int_{ \mathbb{R}^{J}} \int_{ [0,2 \pi]^J}  \bigg| \log \bigg| \sum_{j \leq J} b_j e^{u_j + iv_j } \bigg|  \bigg|^{2k}   \sum_{\veck \in \mathbb{Z}^J}    e^{ -    \sum_j   (v_j + 2 \pi k_j )^2 /M} d \vecv  e^{ -    \sum_j u_j^2/M}    d \vecu  \\
\ll &  M^{J/2}   \int_{ \mathbb{R}^{J}} \int_{ [0,2 \pi]^J}  \bigg| \log \bigg| \sum_{j \leq J} b_j e^{u_j + iv_j } \bigg|  \bigg|^{2k}    d \vecv  e^{ -    \sum_j u_j^2/M}    d \vecu  .
\end{align*}
Next we need the inequality
 $$ \int_0^{2 \pi} \big( \log |a-be^{iv} | \big)^{2k} dv \ll ( C_1 \log |a| )^{2k} + ( C_1 \log |b| )^{2k} + (C_1 k)^{2k} $$
 for some constant $C_1 > 0$. (See Lemma 2.1 in \cite{GL} for a proof.) Hence, we see that
 \begin{align*}
  \int_{ \mathbb{R}^{J}}& \int_{ [0,2 \pi]^J}  \bigg| \log \bigg| \sum_{j \leq J} b_j e^{u_j + iv_j } \bigg|  \bigg|^{2k}    d \vecv  e^{ -    \sum_j u_j^2/M}    d \vecu \\
\ll &     \int_{ \mathbb{R}^{J}} \bigg( \sum_{j \leq J} \big( C_2 \log |b_j e^{u_j}| \big)^{2k}   + (C_2 k )^{2k}  \bigg)    e^{ -    \sum_j u_j^2/M}    d \vecu  \\
\ll &      \sum_{j \leq J}  \int_{ \mathbb{R}^{J}} \big(  ( C_3 u_j )^{2k} + C_3^{2k}  \big)      e^{ -    \sum_j u_j^2/M}    d \vecu  + M^{J/2} (C_2 k )^{2k}  \\
\ll &  M^{J/2} \sum_{j \leq J}  \int_{ \mathbb{R}^{J}} \big(  ( C_3 u_j )^{2k}M^k  + C_3^{2k}  \big)      e^{ -    \sum_j u_j^2 }    d \vecu  + M^{J/2} (C_2 k )^{2k} \\
\ll & M^{J/2} (C_4 kM)^k + M^{J/2} (C_2 k )^{2k}.
\end{align*}
Thus, 
$$ \int_{ \mathbb{R}^{2J}}   \bigg| \log \bigg| \sum_{j \leq J} b_j e^{u_j + iv_j } \bigg|  \bigg|^{2k} e^{ -    \sum_j (u_j^2 + v_j^2 )/M} d \vecu d \vecv  \ll  M^{J+k } (C  k )^k + M^{J } (C k )^{2k}. $$

\subsection{Proof of Proposition \ref{prop3}}\label{sec proof prop3}
By symmetry, it is enough to estimate
 \begin{align*}
 \mathcal{E}_{\vecm, \vecn, 1 } (\theta, T)  := &   \int_{\mathbb{R}^{J}}   \int_{\mathcal{R}_1}  \log \bigg|  1 +  \sum_{j=2}^J \frac{b_j}{b_1}  e^{(u_j -u_1 +i (v_j -v_1)) \sqrt{ \theta \log \log T }  }  \bigg| e^{ -  \sum_j (u_j^2+v_j^2)/\xi_j }       \vecu^{\vecm} \vecv^{\vecn}    d\vecu d\vecv.
 \end{align*}
Let $A_T =  ( \log \log \log T)/4 $. We divide $\mathcal{R}_1 $ into a disjoint union of the sets:
\begin{align*}
 \mathcal{R}_{1,S} := \{  (u_1 , \dots, u_J ) \in \mathcal{R}_1 :  &  - \frac{A_T }{ \sqrt{  \theta \log \log T  }}  < u_\ell - u_1 \leq 0 ~~ \mathrm{for} ~~ \ell \in  S, \\
&   u_j - u_1 \leq - \frac{A_T }{  \sqrt{ \theta \log \log T }} ~~ \mathrm{for} ~~ j \in \{ 2, \dots, J\} \setminus S \}   
\end{align*}
for $S \subset  \{ 2, \dots, J \}$. Let
\begin{align*}
 \mathcal{E}_S := & \mathcal{E}_{\vecm, \vecn, 1, S}(\theta, T) \\
    := &  \int_{\mathbb{R}^{J}}   \int_{\mathcal{R}_{1,S}}  \log \bigg|  1 +  \sum_{j=2}^J \frac{b_j}{b_1}  e^{(u_j -u_1 +i (v_j -v_1)) \sqrt{ \theta \log \log T }  }  \bigg| e^{ -  \sum_j (u_j^2+v_j^2)/ \xi_j }       \vecu^{\vecm} \vecv^{\vecn}    d\vecu d\vecv ,
    \end{align*}
so that 
$$ \mathcal{E}_{\vecm,\vecn,1} ( \theta, T) = \sum_{ S \subset \{ 2, \dots , J\} } \mathcal{E}_S.$$
First consider $ \mathcal{E}_\emptyset $. In this case it is easy to see that
$$  \mathcal{E}_\emptyset  =       \int_{\mathbb{R}^{J}} \int_{\mathcal{R}_{1,\emptyset } }  O(  e^{-A_T } )  e^{ -  \sum_j (u_j^2+v_j^2)/ \xi_j }     \vecu^{\vecm} \vecv^{\vecn}  d\vecu  d\vecv = O(  e^{-A_T } ) .  $$
Next consider $ S \neq \emptyset$, then there is at least one element $ \ell \in S$. We first observe the $u_\ell $ integral:
\begin{align*}
& \int_{ u_1 - \frac{A_T }{ \sqrt{ \theta \log \log T}}}^{u_1}     \log \bigg|  1 +  \sum_{j=2}^J \frac{b_j}{b_1}  e^{(u_j -u_1  +i v_j   - iv_1 )\sqrt{ \theta \log \log T}} \bigg| e^{ -   u_\ell^2/\xi_\ell  } u_\ell^{m_\ell} du_\ell \\
& \ll  \int_{  - \frac{A_T }{ \sqrt{ \theta \log \log T}}}^{0}   \bigg|   \log \bigg| \frac{b_\ell}{ b_1} e^{(u_\ell +i v_\ell   - iv_1 )\sqrt{ \theta \log \log T}} +    \sum_{ j \neq  \ell } \frac{b_j}{b_1}  e^{(u_j -u_1  +i v_j   - iv_1 )\sqrt{ \theta \log \log T}} \bigg| \bigg| du_\ell \\
& \ll   \frac{1}{ \sqrt{ \log \log T}}  \int_{ e^{-A _T} }^{1}    \bigg|   \log \bigg| \frac{b_\ell}{ b_1} w  e^{i( v_\ell  - v_1 )\sqrt{ \theta\log \log T}} +     \sum_{ j \neq \ell } \frac{b_j}{b_1}  e^{(u_j -u_1  +i v_j   - iv_1 )\sqrt{\theta \log \log T}} \bigg| \bigg|    \frac{dw}{w} \\
& \ll  \frac{A_T }{ \sqrt{  \log \log  T}}   +  \frac{1}{ \sqrt{   \log \log  T}}  \int_{ e^{-A_T} }^{1}   \bigg|    \log \bigg|  w  +    \sum_{ j \neq \ell } \frac{b_j}{b_\ell }  e^{(u_j -u_1   +i v_j   - iv_\ell  )\sqrt{ \theta\log \log T}} \bigg| \bigg|  \frac{d w}{w }    \\
& \ll    \frac{  e^{A_T}}{ \sqrt{ \log \log  T}}
\end{align*}
by the substitution $ w = e^{u_\ell \sqrt{ \theta \log \log T}}$. Here, the last inequality holds by the following lemma. 
\begin{lemma}\label{lem:bound log int}
Let $B$ be a fixed positive real number and let $ \epsilon_T >0 $ be a decreasing function to $0$ as $T \to \infty$. Then we have
$$    \int_{\epsilon_T}^1 | \log | u+z| | \frac{du}{u} = O\bigg(  \frac{1}{ \epsilon_T} \bigg) $$
as $T \to \infty$ uniformly for all $|z| \leq B$. 
\end{lemma}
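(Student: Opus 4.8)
The plan is to split the range of integration according to the size of $|u+z|$. Since $u\in[\epsilon_T,1]$ is real and positive while $z$ is bounded, on the part where $|u+z|\ge 1$ we have $1\le|u+z|\le u+|z|\le 1+B$, so $0\le\log|u+z|\le\log(1+B)$; hence the contribution of this part is at most $\log(1+B)\int_{\epsilon_T}^1\frac{du}{u}=\log(1+B)\log(1/\epsilon_T)$, which is $O(1/\epsilon_T)$ because $\log(1/\epsilon_T)\le 1/\epsilon_T$.

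On the part where $|u+z|<1$ the integrand is $\log\frac{1}{|u+z|}\ge 0$, and the only delicate point is the (integrable) logarithmic singularity at $u=-\Re z$, which can occur only when $\Re z<0$. Here I would simply bound $\frac{1}{u}\le\frac{1}{\epsilon_T}$ and pull this factor out, reducing the estimate to $\frac{1}{\epsilon_T}\int_{\{u\in[\epsilon_T,1]\,:\,|u+z|<1\}}\log\frac{1}{|u+z|}\,du$. Using $|u+z|^2=(u+\Re z)^2+(\Im z)^2\ge(u+\Re z)^2$ one has $|u+z|\ge|u+\Re z|$, so on the region considered $|u+\Re z|\le|u+z|<1$ and $0\le\log\frac{1}{|u+z|}\le\log\frac{1}{|u+\Re z|}$. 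Enlarging the domain to $\{u\in\mathbb{R}:|u+\Re z|<1\}$, on which $\log\frac{1}{|u+\Re z|}$ is still nonnegative, and substituting $t=u+\Re z$ gives $\int_{-1}^{1}\log\frac{1}{|t|}\,dt=2$. Thus this part contributes at most $2/\epsilon_T$.

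Adding the two contributions yields the bound $O(1/\epsilon_T)$, uniformly for all $|z|\le B$, as claimed. There is no real obstacle here: the estimate is deliberately crude (a sharper argument would give $O((\log(1/\epsilon_T))^2)$), and all that is needed is to isolate the harmless weight $1/u\le 1/\epsilon_T$ and then observe that $\log\frac{1}{|u+z|}$ has an integrable singularity with a bound independent of $z$, via the elementary reduction $|u+z|\ge|u+\Re z|$.
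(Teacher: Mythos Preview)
Your proof is correct and follows essentially the same approach as the paper: both bound the weight by $1/u\le 1/\epsilon_T$ and reduce the remaining integral $\int_0^1|\log|u+z||\,du$ to the real case via $|u+z|\ge|u+\Re z|$, then use that the logarithmic singularity is integrable with $\int_{-1}^{1}\log\frac{1}{|t|}\,dt=2$. Your version is slightly more streamlined in that you split on $|u+z|\ge 1$ versus $|u+z|<1$ directly and keep the weight $1/u$ on the harmless large part, whereas the paper pulls out $1/\epsilon_T$ at the outset and performs an extra case split on the size of $\Re z$; the underlying idea is the same.
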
 
\begin{proof}
We first observe that it is enough to prove that
$$   \int_0^1 | \log | u+z| | du = O(1)$$
uniformly for bounded $z = \alpha + i \beta$.   By the inequality
\begin{align*}
   - |\log |u+\alpha|| \leq & \log |u+\alpha |  \leq  \log | u+z| = \log \sqrt{  (u+\alpha)^2 + \beta^2   } \leq \log \sqrt{  2 \max \{ (u+\alpha)^2 , \beta^2 \} } \\
  &  = \log \sqrt{2} + \max \{ \log |u+\alpha|, \log |\beta| \} \leq  \log \sqrt{2} +  | \log |u+\alpha| | +    \log B ,    
  \end{align*}
we see that
 $$   \int_0^1 | \log | u+z| | du  \leq    \int_0^1 | \log | u+\alpha | | du   + O(1)  . $$
If $ |\alpha | \geq 2 $, then it is easy to see that 
$$\int_0^1 | \log | u+\alpha | | du = \int_0^1 | \log | \alpha | + O(1)  | du = O(1) . $$
If $ |\alpha | < 2 $, then we split the interval into two intervals depending on the condition $ \log | u+\alpha | \geq 0 $. Thus,
$$ \int_0^1 | \log |u+\alpha || du =   \int_{ |u+\alpha| \geq 1 }      \log |u+\alpha |  du +  \int_{ |u+\alpha| <  1 }     -  \log |u+\alpha |  du . $$
It is easy to see that 
$$  0 \leq  \int_{ [0,1] \cap \{ |u+\alpha| \geq 1 \} }      \log |u+\alpha |  du  \leq \log (1+ B) $$
and 
$$  0 \leq  \int_{ [0,1] \cap \{ |u+\alpha| <  1\} }     -  \log |u+\alpha |  du  \leq 2 \int_0^1 - \log u du  \leq 2 . $$
\end{proof}

Hence, we find that
$$ \mathcal{E}_S  = O \bigg(   \frac{  e^{A_T}}{ \sqrt{ \log \log T}} \bigg) $$
for $ S \neq \emptyset$ and  
$$ \mathcal{E}_{\vecm, \vecn, 1 } (\theta, T) = O(e^{-A_T }) + O\bigg(  \frac{ e^{A_T}}{ \sqrt{ \log \log  T}}   \bigg) =  O\bigg(  \frac{ 1}{  ( \log \log T)^{1/4}}    \bigg) .$$

\subsection{Proof of Lemma \ref{lemma zero density}}\label{sec proof zero density lemma}

 We see that for a fixed real $\beta $ and for each $ i = 1,2 $ 
 $$ \theta_1^\beta - \theta_2^\beta = H_T  \big( \beta \theta_i^{\beta -1} + O(H_T  ) \big) .$$
 Thus, by \eqref{eqn:main int asymp} 
\begin{equation*} \begin{split}
 \theta_1^\alpha    I_{  \vecm, \vecn}( \theta_1 , T) & - \theta_2^\alpha  I_{  \vecm, \vecn}( \theta_2 , T)  \\
= &  H_T   \sqrt{    \log \log T} d_{\vecn}      \bigg( \alpha+ \frac12 \bigg)  \theta_i^{   \alpha  -1/2   }    \sum_{ \ell=1}^J   \int_{\mathcal{R}_\ell }     u_\ell         e^{ -  \sum_j  u_j^2 /    \xi_j  }     \vecu^{\vecm}     d\vecu  \\
& + H_T    d_{\vecn}  \alpha  \theta_i^{   \alpha    -1 }     \sum_{ \ell=1}^J   \int_{\mathcal{R}_\ell }     \log |b_\ell |       e^{ -  \sum_j  u_j^2 /   \xi_j  }        \vecu^{\vecm}     d\vecu \\
&   + \theta_1^\alpha \mathcal{E}_{\vecm, \vecn } (\theta_1, T) -  \theta_2^\alpha   \mathcal{E}_{\vecm, \vecn } (\theta_2, T)    + O( H_T  ^2 \sqrt{ \log \log T})  
 \end{split}\end{equation*}
 for each $ i = 1,2 $. Recall that $  \mathcal{E}_{\vecm, \vecn } (\theta, T)    := \sum_{\ell=1}^J  \mathcal{E}_{\vecm, \vecn, \ell } (\theta, T)   $. Hence, without loss of generality, we consider  
 \begin{align*}
 \theta_1^\alpha &\mathcal{E}_{\vecm, \vecn,1 } (\theta_1, T) -  \theta_2^\alpha  \mathcal{E}_{\vecm, \vecn,1 } (\theta_2, T)   \\
   = &   \int_{\mathbb{R}^{J}}   \int_{\mathcal{R}_1}  \log \bigg|  1 +  \sum_{j=2}^J \frac{b_j}{b_1}  e^{(u_j -u_1 +i (v_j -v_1)) \sqrt{  \log \log T }  }  \bigg|  \\
   & \times   \bigg(  \theta_1^{\alpha'}  e^{ -  \sum_j (u_j^2+v_j^2)/( \theta_1 \xi_j) }       -  \theta_2^{\alpha'}    e^{ -  \sum_j (u_j^2+v_j^2)/( \theta_2 \xi_j) }       \bigg)  \vecu^{\vecm}\vecv^{\vecn}    d\vecu d\vecv,
 \end{align*}
 where
 $$ \alpha'= \alpha -J - \frac{ \mathcal{K}(\vecm+\vecn)}{2}.  $$
 We see that 
\begin{align*}
 \theta_1^{ \alpha'}   e^{ -  \sum_j (u_j^2+v_j^2)/( \theta_1 \xi_j) }   &    -  \theta_2^{ \alpha'}     e^{ -  \sum_j (u_j^2+v_j^2)/( \theta_2 \xi_j) }  \\
 = &  \int_{\theta_2}^{\theta_1}     \frac{ \partial}{\partial w}\bigg( w^{ \alpha'}   e^{ -  \sum_j (u_j^2+v_j^2)/( w \xi_j) }\bigg)   dw\\
 = &  \int_{\theta_2}^{\theta_1}    \bigg(   \sum_{j \leq J}  \frac{ u_j^2 + v_j^2}{ w^2 \xi_j }  + \frac{ \alpha'}{w}      \bigg) w^{ \alpha'}  e^{ -  \sum_j (u_j^2+v_j^2)/( w \xi_j) }   dw \\
  \ll  & H_T      \bigg(   \sum_{j \leq J} ( u_j^2 + v_j^2)   +  1      \bigg) e^{ -  \sum_j (u_j^2+v_j^2)/( \theta_2  \xi_j) } .  
\end{align*} 
Thus, by adapting the proof of Proposition \ref{prop3} we find that
 \begin{align*}
 \theta_1^\alpha \mathcal{E}_{\vecm, \vecn,1 }& (\theta_1, T) -  \theta_2^\alpha  \mathcal{E}_{\vecm, \vecn,1 } (\theta_2, T)   \\
   \ll  &  H_T   \int_{\mathbb{R}^{J}}   \int_{\mathcal{R}_1} \bigg|  \log \bigg|  1 +  \sum_{j=2}^J \frac{b_j}{b_1}  e^{(u_j -u_1 +i (v_j -v_1)) \sqrt{  \log \log T }  }  \bigg|   \bigg|  e^{ -  \sum_j (u_j^2+v_j^2)/( \theta_2  \xi_j) }   \\
   &  \bigg(   \sum_{j \leq J} ( u_j^2 + v_j^2)   +  1      \bigg)  \big| \vecu^{\vecm } \vecv^{\vecn}\big|  d\vecu d\vecv \\
   \ll&  \frac{H_T   }{ ( \log \log T)^{1/4}} 
 \end{align*}
 and this completes the proof of the lemma.

\section{acknowledgement}

This work was partly done while the author was visiting Tokyo Institute of Techology in December 2017 and Nagoya University in February 2018. He would like to thank Masatoshi Suzuki and Kohji Matsumoto for their support and useful comments to this project. This work has been supported by the National Research Foundation of Korea (NRF) grant funded by the Korea government(MSIP) (No. 2016R1C1B1008405).

\end{document}